\newtheorem{theorem}{Theorem}[section]
\newtheorem{definition}[theorem]{Definition}
\newtheorem{example}[theorem]{Example}
\newtheorem{lemma}[theorem]{Lemma}
\numberwithin{equation}{section}
\newtheorem{proposition}[theorem]{Proposition}
\newtheorem{remark}[theorem]{Remark}
\renewcommand{\ell}{l}
\renewcommand{\epsilon}{\varepsilon}
\def\C{\mathbb{C}}
 \def\cS{\mathcal{S}}
\def\rd{\bR^d}
\def\R{\right)}
\def\<{\left<}
\def\>{\right>}
\def\mv1{M_v^1}
\def\mn{(m,n)}
\def\mn'{(m',n')}
\def\N{\mathbb{N}}
\def\R{\mathbb{R}}
\def\C{\mathbb{C}}
\def\rd{\mathbb{R}^d}
\begin{document}

\begin{abstract} For a class of ordinary differential
operators $P$ with polynomial coefficients, we give a
necessary and sufficient condition for $P$ to be globally
regular in $\R$, i.e. $u\in\cS^\prime(\R)$ and
$Pu\in\cS(\R)$ imply $u\in \cS(\R)$ (this can be regarded as a global
version of the Schwartz' hypoellipticity notion). The
condition involves the asymptotic behaviour, at infinity,
of the roots $\xi=\xi_j(x)$ of the equation $p(x,\xi)=0$,
where $p(x,\xi)$ is the (Weyl) symbol of $P$.
\end{abstract}

\title[]{Global regularity for ordinary differential operators with polynomial coefficients}
\author{Fabio Nicola \and Luigi Rodino}
\address{Dipartimento di Matematica, Politecnico di
Torino, Corso Duca degli
Abruzzi 24, 10129 Torino,
Italy}
\address{Dipartimento di Matematica,  Universit\`{a} degli Studi di Torino,
Via Carlo Alberto 10, 10123
Torino, Italy}

\email{fabio.nicola@polito.it}
\email{luigi.rodino@unito.it} \subjclass[2000]{35H10,
47E05, 35S05, 14H05}
\date{}
\keywords{Global regularity, Puiseaux expansions, symmetric functions, elimination theory}

\maketitle

\section{Introduction and statement of the result}
Linear ordinary differential operators with polynomial
coefficients play an important role in mathematics. On one
hand, they were source for the study of classes of special
functions, with links all around to Applied Sciences. On
the other hand, the general study of Fuchs points,
irregular singular points and Stokes phenomena present deep
connections with different branches of geometry.\par We may
write the generic operator in the form
\begin{equation}\label{1.1}
P=\sum_{\alpha+\beta\leq m} a_{\alpha,\beta} x^\beta D^\alpha, \qquad a_{\alpha,\beta}\in\C,
\end{equation}
where we use the notation $D=-id/dx$. In this paper we deal
with the Fourier analysis of \eqref{1.1}. Namely, rather
than analysing the extension of the solution $u$ of the
corresponding equation in the complex domain $\C$, we shall
address to their analysis in $\R^2_{x,\xi}$, with respect
to the phase-space variables $x,\xi\in\R$. Basic function
space in this order of ideas is the space $\cS(\R)$ of L.
Schwartz \cite{schwartz}, defined by imposing
\begin{equation}
\sup_{x\in\R} |x^\beta D^\alpha u(x)|<\infty,\qquad \forall \alpha,\beta\in\mathbb{N}.
\end{equation}
These functions are regular in $\R^2_{x,\xi}$, in the sense that both $u(x)$ and its Fourier transform $\widehat{u}(\xi)=\int_\R e^{-ix\xi} u(x)\,dx$ present a rapid decay at infinity, beside local regularity. As universal set for our study we shall take the dual $\cS^\prime(\R)$; note that this will exclude solutions which have exponential growth at infinity.\par
Aim of this paper is to characterize the operators $P$ in \eqref{1.1} which are globally regular, according the following definition.
\begin{definition}
We say that $P$ is globally regular if
\begin{equation}\label{1.3}
u\in\cS^\prime(\R)\ {\it and}\ Pu\in\cS(\R)\Rightarrow u\in \cS(\R).
\end{equation}
\end{definition}
In particular, if \eqref{1.3} is satisfied, the solutions
$u\in\cS^\prime(\R)$ of $Pu=0$ belong to $\cS(\R)$. Global
regularity turns out to be basic information in many
applications. So for example in connection with Quantum
Mechanics, assuming $P$ in \eqref{1.1} is self-adjoint, we
may deduce that the eigenfunctions, intended as solutions
$u\in L^2(\R)$ of $Pu=0$, are in $\cS(\R)$. In the Theory
of Signals, where we may regard $P$ in \eqref{1.1} as a
filter reproduced by electronic devices, we have that the
globally regular $P$ are exactly the ineffective filters,
i.e. filters which do not cancel any essential part of the
signal.\par The literature concerning global regularity,
sometimes also called global hypoellipticity, in these last
30 years is extremely large, taking also into account the
same problem for operators with smooth coefficients and
pseudo-differential operators in $\R^n$, with $n\geq1$. We
address to the recent monograph of the authors
\cite{nicola} for a survey.\par Let us begin with a simple
example. The constant coefficient operator
$p(D)=\sum_{\alpha\leq m} c_\alpha D^\alpha$,
$c_\alpha\in\C$, is globally regular if and only if
$p(\xi)\not=0$ for all $\xi\in\R$. In fact, if
$p(\xi_0)=0$, then $u(x)= {\rm exp}[i\xi_0 x]$ provides a
solution of $p(D) u=0$, whereas by Fourier transform one
gets easily that $p(D)u\in\cS(\R)$, $u\in\cS^\prime(\R)$
imply $u\in\cS(\R)$ if $p(\xi)\not=0$ for all $\xi\in\R$.
The same result keeps valid for partial differential
operators with constant coefficients.\par Passing to
operators with polynomial coefficients, a characterization
of globally regular operators in $\R^n$ is certainly out of
reach at this moment. However for the ordinary differential
operator \eqref{1.1} a necessary and sufficient condition
seems possible, and we shall give it in the following,
under an additional algebraic condition.\par Consider first
the standard left-symbol of $P$ in \eqref{1.1}:
\begin{equation}\label{1.4}
a(x,\xi)=\sum_{\alpha+\beta\leq m} a_{\alpha,\beta} x^\beta \xi^\alpha.
\end{equation}
In our approach, it will be convenient to argue on the Weyl
symbol, see for example \cite[Chapter XVIII]{hormanderIII},
given by
\begin{align}\label{1.5}
p(x,\xi)&=\sum_{\alpha+\beta\leq m} c_{\alpha,\beta} x^\beta \xi^\alpha\\
&=\sum_{\gamma\geq0} \frac{1}{\gamma!}\left(-\frac{1}{2}\right)^\gamma \partial^\gamma_\xi D^\gamma_x a(x,\xi).\nonumber
\end{align}
We shall assume $c_{m,0}=1$ in \eqref{1.5}, i.e. $a_{m,0}=1$ in \eqref{1.4}.
 We have
\begin{equation}\label{fact}
p(x,\xi)=\prod_{j=1}^m (\xi-\xi_j(x))
\end{equation}
where $\xi_j(x)$, $j=1,\ldots,m,$ are real-analytic functions defined for $x\in\R$, $|x|$ large enough. Let us denote by $\lambda_1,\ldots,\lambda_m$ the complex roots of the polynomial
\[
\sum_{\alpha=0}^m c_{\alpha,m-\alpha} \lambda^{\alpha}.
\]
We have, possibly after relabeling,
\begin{equation}\label{fact1}
\xi_j(x)/x\to \lambda_j\quad {\rm as}\ x\in\R,\ |x|\to+\infty.
\end{equation}
In fact, the roots have a Puiseaux expansion at infinity (\cite[Lemma A.1.3, page 363]{hormanderII}), namely
\begin{equation}\label{puiseaux}
\xi_j(x)=\lambda_j x+\sum_{- \infty<k\leq p-1}
c_{j,k}\big(x^{1/p}\big)^k\qquad {\rm for}\ |x|\ {\rm
large},
\end{equation} for some integer $p$, where the
function $x^{1/p}$ is the positive $p$th root of $x$ for
$x>0$ and, say, $x^{1/p}=|x|^{1/p} e^{i\pi/p}$ for $x<0$
(by taking the the lowest common multiple we can assume
that the same integer $p$ occurs for every $j$).
 \par

We suppose that the roots which approach the real axis at infinity are asymptotically separated, in the sense that
\begin{equation}\label{separazione}
{\it If}\ \lambda_j=\lambda_k\in\R,\ {\it with}\ j\not=k,\ {then}
\end{equation}
\[
 |\xi_j(x)-\xi_k(x)|\gtrsim\max\{|\xi_j(x)-\lambda_j x|,|\xi_k(x)-\lambda_k x|,|x|^{-1+\epsilon}\}
\]
for some $\epsilon>0$.
\begin{theorem}\label{mainteo}
Assume \eqref{separazione}. Then $P$ is globally regular, i.e. \eqref{1.3} holds, if and only if
\begin{equation}\label{condition}
|x\,{\rm Im}\,\xi_j(x)|\to+\infty\qquad{\rm as}\ \ x\in\R,\ |x|\to+\infty.
\end{equation}
\end{theorem}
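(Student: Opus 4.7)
\emph{Strategy.} My plan is to reduce the statement to its first-order counterpart. First, since the principal symbol of $P$ is $\xi^m$, classical local elliptic regularity gives that every $u\in\cS'(\R)$ with $Pu\in\cS(\R)$ is automatically smooth on $\R$, so from now on I may assume $u\in C^\infty(\R)\cap\cS'(\R)$. Next, for $|x|$ large I would factorize $P$ as a non-commutative product of first-order operators: after choosing smooth extensions $\tilde\xi_j$ to $\R$ of the Puiseux roots in \eqref{puiseaux}, one has, for any ordering permutation $\sigma$,
\[
P=\tilde L_{\sigma(m)}\tilde L_{\sigma(m-1)}\cdots\tilde L_{\sigma(1)}+R,\qquad \tilde L_j=D-\tilde\xi_j(x),
\]
with $R$ a differential operator of order $<m$ with smooth, compactly supported coefficients. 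Applied to a smooth $u$ this yields $Ru\in C_c^\infty(\R)\subset\cS(\R)$. The separation hypothesis \eqref{separazione} is exactly what one needs to build this factorization when several $\lambda_j$ coincide: it gives the lower bound $|\tilde\xi_j-\tilde\xi_k|\gtrsim |x|^{-1+\epsilon}$, and through the non-commutative Vieta-type procedure this propagates to polynomial bounds on all intermediate coefficients and their derivatives.

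\emph{First-order analysis.} The heart of the proof is then the first-order case: $L=D-a(x)$ with $|xa_I(x)|\to+\infty$, where $a_I=\mathrm{Im}\,a$. Set $\Phi(x)=\int_0^x a_I(s)\,ds$; the hypothesis gives $|\Phi(x)|\ge N\log\langle x\rangle-C_N$ for every $N$, and hence the following dichotomy for the homogeneous solution $E(x)=\exp\bigl(i\int_0^x a\bigr)$: on each of the half-lines $\{x>0\}$ and $\{x<0\}$, either $|E|$ decays faster than every polynomial or grows faster than every polynomial. In particular, if $E\in\cS'(\R)$ then $E\in\cS(\R)$. For the inhomogeneous equation $Lu=f\in\cS(\R)$ I would construct a Schwartz particular solution $u_p$ on each half-line by Duhamel with the base point chosen so that $e^{-(\Phi(x)-\Phi(t))}\le 1$ on the integration range, and establish the Schwartz property of $u_p$ by iterated integration by parts against the derivative of $E^{-1}$: each step produces a boundary term of the form (Schwartz)$/a(x)^k$, which is again Schwartz because $|a(x)|^{-1}\le |x|/|xa_I(x)|=O(|x|)$. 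Any $u\in\cS'(\R)$ with $Lu\in\cS(\R)$ then differs from $u_p$ by $CE$, and by the dichotomy $u\in\cS(\R)$. Iterating through the $m$ factors in the factorization of $P$ completes the sufficiency.

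\emph{Necessity and main obstacle.} For necessity, if $|x\,\mathrm{Im}\,\xi_j(x)|\not\to+\infty$ for some $j$, an inspection of the Puiseux expansion \eqref{puiseaux} shows that $\mathrm{Im}\,\xi_j(x)=O(|x|^{-1})$, hence $\Phi_j(x)=O(\log|x|)$. Taking a cutoff $\chi\in C^\infty(\R)$ equal to $1$ for $|x|$ large and vanishing near the origin, the function $u_0(x)=\chi(x)\exp\bigl(i\int_{x_0}^x\xi_j(s)\,ds\bigr)$ lies in $\cS'(\R)\setminus\cS(\R)$: its modulus is bounded above and below by fixed powers of $|x|$ at infinity. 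Ordering the factorization so that $\tilde L_j$ is the rightmost factor, $\tilde L_j u_0\in C_c^\infty(\R)$ (it vanishes where $\chi=1$), and hence $Pu_0\in\cS(\R)$, contradicting \eqref{1.3}. I expect the principal obstacle to be the factorization lemma itself: constructing the smooth extensions $\tilde\xi_j$ and carrying out the non-commutative Vieta procedure so that coefficients remain under polynomial control is the most delicate step, and it is precisely there that \eqref{separazione} is indispensable—without it, the Puiseux expansions of coinciding $\lambda_j$'s cannot be algebraically separated with polynomially bounded denominators, and the first-order reduction breaks down.
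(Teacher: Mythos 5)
Your overall strategy---reduce to first-order factors and solve each by explicit Duhamel/ODE estimates, keeping everything on the real line instead of on the phase space---is genuinely different from the paper's, which works with the global wave-front set, metaplectic conjugation, and microlocal parametrices in Weyl--H\"ormander symbol classes. There is, however, a gap in the factorization step that your proposal relies on.

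You claim that for large $|x|$ the operator can be written as a \emph{full} non-commutative product $\tilde L_{\sigma(m)}\cdots\tilde L_{\sigma(1)}+R$ of first-order factors, and that \eqref{separazione} supplies all the separation needed to carry out the non-commutative Vieta procedure. This overlooks that \eqref{separazione} only separates roots whose common limit $\lambda_j=\lambda_k$ is \emph{real}. Two roots with coinciding \emph{non-real} $\lambda_j=\lambda_k$ may coincide or become arbitrarily close at infinity; nothing in the hypotheses precludes this, and in that case the linear system arising in the Vieta step (which requires dividing by the differences $\xi_j-\xi_k$, as in Lemma~\ref{lemma1}) is singular or wildly unbounded, so the iterative construction of the corrected first-order operators breaks down. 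The paper avoids this exactly by \emph{not} factoring all the way: Proposition~\ref{pro19} works only after a metaplectic shear has sent the relevant $\lambda_j$ to $0$, and then factors out only the roots with $\lambda_j=0$ (where \eqref{separazione} does the work), while the remaining roots---including all the possibly-coalescing complex ones---are bundled into the single polynomial factor $\sum_k\eta_k(x)D^{r_1-k}$. That factor never needs to be split: it is non-characteristic near the ray $(1,0)$ precisely because its roots stay bounded away from $\xi=0$ on the scale $|x|$, so Proposition~\ref{pro16} disposes of it without touching its internal root structure. In your ODE picture there is no analogue of ``away from the characteristic direction,'' so you would have to invert that factor globally, which forces you to separate those complex roots---an assumption the theorem does not make. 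An accurate statement of what your proof establishes is that the theorem holds under the \emph{stronger} hypothesis that all pairs $(j,k)$ with $\lambda_j=\lambda_k$, real or not, satisfy the separation bound.

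Two secondary points. First, even where the Vieta iteration can be carried out, the remainder $R$ cannot be arranged to have \emph{compactly supported} coefficients: what the iteration produces (and what the paper proves) is coefficients that are rapidly decreasing together with their derivatives, obtained by solving order by order in the Puiseux scale $x^{-1/p}$ (the system \eqref{system2} and the formal-series step \eqref{med6}--\eqref{med7}); this is where the bulk of the technical work lies and is not a formality. Second, your first-order scalar analysis is, in spirit, the paper's treatment of the factors $(D-\eta_j)$ in Section~4: the conjugation $D-\eta_j=e^{iQ_j}(D-\tilde\eta_j)e^{-iQ_j}$ plays the role of your $\Phi$, and the dichotomy (rapid decay or rapid growth of the homogeneous solution) is what makes $\xi-\tilde\eta_j$ hypoelliptic in $\tilde\Gamma^1_\delta(\R)$. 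Your necessity construction, taking $u_0=\chi\exp\bigl(i\int\xi_j\bigr)$ and ordering the chosen factor rightmost so that $Pu_0=Ru_0$, is essentially the paper's Section~4.2 and is fine once the factorization issue above is repaired (with ``rapidly decreasing'' in place of ``compactly supported'' for $R$).
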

For a better understanding of \eqref{separazione}, \eqref{condition}, we may argue on the Puiseaux expansions \eqref{puiseaux}. Let us emphasize the first non-vanishing term after $\lambda_j x$, namely write
\begin{equation}\label{1.x}
\xi_j(x)=\lambda_j x+c_{j,r(j)} \big(x^{1/p}\big)^{r(j)}+\sum_{-\infty<k<r(j)} c_{j,k} \big(x^{1/p}\big)^k
\end{equation}
with $c_{j,r(j)}\not=0$. Arguing for simplicity for $x>0$, condition \eqref{separazione} states that, if $\lambda_j=\lambda_k\in\R$, then one at least between $r(j)$ and $r(k)$ is strictly larger than $-1$ and moreover, in the case $r(j)=r(k)>-1$, we have $c_{j,r(j)}\not=c_{k,r(k)}$.\par
As for \eqref{condition}, arguing again for $x>0$, it means that for all $j=1,\ldots,m$, we have ${\rm Im}\, \lambda_j\not=0$ or else ${\rm Im}\, c_{j,k}\not=0$ for some $k$ with $k/p>-1$. To be definite: \eqref{condition} is not satisfied when for some $j$ the function
\[
\lambda_j x+\sum_{-p<k<p-1} c_{j,k} \big(x^{1/p}\big)^k
\]
is real-valued.\par  In the next Section 2,
we recall some notation for the pseudodifferential calculus
and we reduce the problem to the analysis of a global wave
front set. The proof of Theorem \ref{mainteo} is given in
Section 4, by using the factorization of the operator
presented in Section 3. In some sense, this factorization
is the counterpart in phase-space of the classical methods
of the asymptotic integration. In fact, we think that, in terms of classical asymptotic analysis, under condition \eqref{separazione}, a proof of Theorem \ref{mainteo} would be as well possible, but much more difficult. Section 5 is devoted to
remarks and examples. In particular, we recapture some
existing results on the global regularity for operators of
the form \eqref{1.1}, giving corresponding references.\par
We finally observe that Theorem
\ref{mainteo} keeps valid for relevant classes of operators
of the form \eqref{1.1}, independently of the asymptotic
separation \eqref{separazione}. For example,
\eqref{separazione} is not satisfied for constant
coefficients operators with a double root
$\xi_j(x)=\xi_k(x)=\xi_0$, whereas in this class
\eqref{condition} is necessary and sufficient for the
global regularity, as observed before. Regretfully, the
assumption \eqref{separazione} will be essential in our
proof of Theorem \ref{mainteo} for operators with
polynomial coefficients.

\section{Microlocal reduction}
We recall that a pseudodifferential operator in $\R$,
according to the standard quantization, is an integral
operator of the form
\[
p(x,D) u(x)= (2\pi)^{-1}\int_{\R} e^{ix\xi} p(x,\xi)
\widehat{u}(\xi)\,d\xi,
\]
for $u\in\cS(\R)$, where the so-called symbol $p(x,\xi)$ is
a smooth function in $\R^2$ which satisfies suitable growth
estimates at infinity, which will be detailed below. The corresponding
operator $p(x,D)$ will define continuous maps
$\cS(\R)\to \cS(\R)$ and $\cS'(\R)\to\cS'(\R)$.\par
An important class of symbols is given by the space
$\Gamma^m(\R)$ of smooth functions $p(x,\xi)$ in $\R^2$
satisfying the estimates
\[
|\partial^\alpha_\xi\partial^\beta_x p(x,\xi)|\leq
C_{\alpha\beta}(1+|x|+|\xi|)^{m-\alpha-\beta},\qquad\forall\alpha,\beta\in\mathbb{N},\ (x,\xi)\in\R^2,
\]
for some $m\in\R$. This class arises, in particular, in the following definition of the global wave-front set of a temperate distribution, as introduced in \cite{hormanderhyper}.\par A point
$(x_0,\xi_0)\in\R^2\setminus\{0\}$ is called {\it non-characteristic}
for $p\in\Gamma^m(\R)$, if there are $\epsilon,C>0$ such
that
\begin{equation}\label{med11}
|p(x,\xi)|\geq C(1+|x|+|\xi|)^m\quad {\rm for}\ (x,\xi)\in
V_{(x_0,\xi_0),\epsilon}
\end{equation}
 where, for $z_0\in\R^2$, $z_0\not=0$, $V_{z_0,\epsilon}$ is the conic neighborhood
\[
V_{z_0,\epsilon}=\Big\{z\in\R^2\setminus\{0\}:\
\Big|\frac{z}{|z|}-\frac{z_0}{|z_0|}\Big|<\epsilon,\
|z|>\epsilon^{-1}\Big\}.
\]
Let $u\in\cS'(\R)$. We define its (global) wave-front set
$WF(u)\subset\R^{2}\setminus\{0\}$ by saying that
$(x_0,\xi_0)\in\R^{2}$, $(x_0,\xi_0)\not=(0,0)$, does not belong to $WF(u)$ if there
exists $\psi\in\Gamma^0(\R)$ which is non-characteristic at
$(x_0,\xi_0)$, such that $\psi(x,D) u\in\cS(\R)$. The set $WF(u)$ is a conic closed subset of $\R^2\setminus\{0\}$.
\begin{proposition}{\rm (\cite{hormanderhyper})}\label{pro16}
If the point $(x_0,\xi_0)$ is non-characteristic for $p$, and $(x_0,\xi_0)\not\in WF(p(x,D)u)$, then $(x_0,\xi_0)\not\in WF(u)$.
\end{proposition}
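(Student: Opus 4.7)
The plan is to prove the proposition by constructing a microlocal parametrix for $p(x,D)$ near the non-characteristic point $(x_0,\xi_0)$, using the standard pseudodifferential calculus on $\Gamma^m(\R)$. The target is to build $q\in\Gamma^{-m}(\R)$ and $\psi\in\Gamma^0(\R)$, with $\psi$ non-characteristic at $(x_0,\xi_0)$, such that
\[
q(x,D)\,p(x,D)=\psi(x,D)+R,
\]
where $R$ is a smoothing operator (symbol in $\bigcap_m \Gamma^m=\Gamma^{-\infty}$, hence mapping $\cS'(\R)\to\cS(\R)$). Once this is achieved, applying both sides to $u$ yields $\psi(x,D)u=q(x,D)\bigl(p(x,D)u\bigr)+Ru$, and the conclusion $(x_0,\xi_0)\notin WF(u)$ will follow from the hypothesis $(x_0,\xi_0)\notin WF(p(x,D)u)$.

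For the construction, I would first fix three nested conic neighborhoods of $(x_0,\xi_0)$, say $V_1\subset\overline{V_1}\subset V_2\subset\overline{V_2}\subset V_3\subset V_{(x_0,\xi_0),\epsilon}$ (all intersected with the region $|z|>\epsilon^{-1}$), and pick $\psi,\phi\in\Gamma^0(\R)$ which are homogeneous of degree $0$ outside a large ball, with $\psi\equiv 1$ on $V_1$, $\operatorname{supp}\psi\subset V_2$, $\phi\equiv 1$ on $V_2$, $\operatorname{supp}\phi\subset V_3$. On $V_3$ the ellipticity estimate \eqref{med11} holds, so I set $q_0(x,\xi)=\phi(x,\xi)/p(x,\xi)$, which is well defined and belongs to $\Gamma^{-m}(\R)$ after a smooth extension by zero. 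The composition formula in the $\Gamma^m$-calculus gives
\[
q_0\# p=\phi+r_1,\qquad r_1\in\Gamma^{-1}(\R),
\]
with $r_1$ essentially supported in $V_3$. I then construct inductively symbols $q_j\in\Gamma^{-m-j}(\R)$, microlocally supported in $V_3$, so that $(q_0+q_1+\dots+q_N)\# p=\phi+r_{N+1}$ with $r_{N+1}\in\Gamma^{-N-1}(\R)$. Standard Borel-type asymptotic summation produces $\tilde q\sim\sum_j q_j$ in $\Gamma^{-m}(\R)$ with $\tilde q\# p=\phi+R_1$, $R_1\in\Gamma^{-\infty}(\R)$. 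Finally I set $q=\psi\# \tilde q\in\Gamma^{-m}(\R)$; since $\phi\equiv 1$ on $\operatorname{supp}\psi$, the composition $\psi\#\phi$ equals $\psi$ modulo $\Gamma^{-\infty}$, giving $q\# p=\psi\#\phi+\psi\# R_1=\psi+R$ with $R\in\Gamma^{-\infty}(\R)$.

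To finish, denote $f:=p(x,D)u\in\cS'(\R)$. The hypothesis $(x_0,\xi_0)\notin WF(f)$ provides $\chi\in\Gamma^0(\R)$, non-characteristic at $(x_0,\xi_0)$, with $\chi(x,D)f\in\cS(\R)$. By choosing $V_1$ small enough we may arrange that $\chi$ is non-characteristic on all of $V_3$, whence the same parametrix scheme yields $\tilde\chi\in\Gamma^0(\R)$ with $q=q\#\chi\#\tilde\chi+R'$, $R'\in\Gamma^{-\infty}(\R)$. Then
\[
q(x,D)f=(q\#\chi)(x,D)\bigl(\tilde\chi(x,D)f\bigr)+R''f,
\]
with $R''\in\Gamma^{-\infty}(\R)$; both summands lie in $\cS(\R)$, since $\tilde\chi(x,D)f\in\cS(\R)$ and pseudodifferential operators preserve $\cS(\R)$, while $\Gamma^{-\infty}$-operators map $\cS'(\R)$ into $\cS(\R)$. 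Combining with the identity $\psi(x,D)u=q(x,D)f+Ru$, I conclude $\psi(x,D)u\in\cS(\R)$, and since $\psi$ is non-characteristic at $(x_0,\xi_0)$, we obtain $(x_0,\xi_0)\notin WF(u)$.

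The main obstacle is bookkeeping inside the parametrix construction: one must verify that the composition formula in $\Gamma^m(\R)$ really produces remainders of decreasing order $\Gamma^{-1},\Gamma^{-2},\dots$ with essential support kept inside the ellipticity region $V_3$, and that the asymptotic sum $\tilde q\sim\sum q_j$ can be realized in $\Gamma^{-m}(\R)$. Everything else is a matter of choosing the cutoffs $\psi,\phi,\chi$ coherently and invoking the mapping properties of $\Gamma^{-\infty}$-operators between $\cS'(\R)$ and $\cS(\R)$.
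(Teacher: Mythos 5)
Your argument is essentially the same as the paper's: build a microlocal parametrix for $p(x,D)$ near the non-characteristic point $(x_0,\xi_0)$ via the ellipticity estimate \eqref{med11} and the asymptotic $\Gamma^m$-calculus with a Borel sum, then combine it with the cutoff coming from the hypothesis $(x_0,\xi_0)\not\in WF\bigl(p(x,D)u\bigr)$ and close using the support and composition rules (the paper phrases the parametrix as $QP=I+R$ with $R$ Schwartz in a conic neighborhood rather than $q\#p=\psi+R$ with $R\in\Gamma^{-\infty}$, but these are equivalent formulations of the same construction). One small slip in your last step: the hypothesis gives $\chi(x,D)f\in\cS(\R)$, not $\tilde\chi(x,D)f\in\cS(\R)$, so the identity should read $q=q\#\tilde\chi\#\chi+R'$ (parametrix $\tilde\chi$ of $\chi$ on the \emph{left}), yielding $q(x,D)f=(q\#\tilde\chi)(x,D)\bigl(\chi(x,D)f\bigr)+R'f$; after this swap both summands are indeed in $\cS(\R)$ and the conclusion follows as you intend.
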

\begin{proposition}{\rm (\cite{hormanderhyper})}\label{pro2.1bisbis}
For every $p\in \Gamma^m(\R)$, and $u\in\cS^\prime(\R)$, we have $WF(p(x,D)u)\\ \subset WF(u)$.
\end{proposition}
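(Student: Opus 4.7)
\emph{Plan.} I argue contrapositively: fix $(x_0,\xi_0)\in\R^2\setminus\{(0,0)\}$ with $(x_0,\xi_0)\notin WF(u)$ and produce $\tilde\psi\in\Gamma^0(\R)$ non-characteristic at $(x_0,\xi_0)$ for which $\tilde\psi(x,D)\,p(x,D)u\in\cS(\R)$. By the definition of $WF(u)$, there is $\psi\in\Gamma^0(\R)$ non-characteristic at $(x_0,\xi_0)$ with $\psi(x,D)u\in\cS(\R)$, so \eqref{med11} holds with $m=0$ on some conic neighborhood $V$ of $(x_0,\xi_0)$. I then choose further cutoffs $\chi,\tilde\psi\in\Gamma^0(\R)$, both conically supported in $V$, both non-characteristic at $(x_0,\xi_0)$, and arranged so that $\chi\equiv 1$ on a conic neighborhood of $\mathrm{supp}\,\tilde\psi$.

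\emph{Microlocal parametrix.} Since $\psi$ is elliptic on $\mathrm{supp}\,\chi\subset V$, the standard iterative construction in the Shubin calculus (initialize $e_0=\chi/\psi$ on $V$, extend by zero, then add correctors $e_j\in\Gamma^{-j}(\R)$ to cancel the subleading terms of the composition formula, and Borel-sum) produces $e\in\Gamma^0(\R)$ and $r\in\Gamma^{-\infty}(\R)$ with
\[
e(x,D)\,\psi(x,D)=\chi(x,D)+r(x,D).
\]
Because pseudodifferential operators send $\cS(\R)$ into itself while operators with symbol in $\Gamma^{-\infty}(\R)$ send $\cS'(\R)$ into $\cS(\R)$, applying both sides to $u$ gives $\chi(x,D)u\in\cS(\R)$.

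\emph{Splitting and disjoint supports.} I decompose
\[
\tilde\psi(x,D)\,p(x,D)u=\tilde\psi(x,D)\,p(x,D)\,\chi(x,D)u+\tilde\psi(x,D)\,p(x,D)\,\bigl(\Id-\chi(x,D)\bigr)u.
\]
The first summand lies in $\cS(\R)$ since $\chi(x,D)u$ does and pseudodifferential operators preserve $\cS(\R)$. For the second summand I claim that $\tilde\psi(x,D)\,p(x,D)\,(\Id-\chi(x,D))$ has symbol in $\Gamma^{-\infty}(\R)$: the asymptotic expansion of the triple symbol $\tilde\psi\#p\#(1-\chi)$ consists of terms of the form $c_{\alpha\beta\gamma}\,\partial^\alpha\tilde\psi\cdot\partial^\beta p\cdot\partial^\gamma(1-\chi)$, where each $\partial^{\alpha}\tilde\psi$ is supported in $\mathrm{supp}\,\tilde\psi$ and each $\partial^{\gamma}(1-\chi)$ is supported in $\mathrm{supp}(1-\chi)$. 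These two sets are disjoint by the choice of $\chi$, so every such term vanishes identically and the asymptotic sum is zero. Thus the second summand is also in $\cS(\R)$, hence $\tilde\psi(x,D)\,p(x,D)u\in\cS(\R)$ and $(x_0,\xi_0)\notin WF(p(x,D)u)$.

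The two substantive ingredients are the elliptic parametrix of the second paragraph and the disjoint-supports vanishing of the third; both are routine facts of the global $\Gamma^m$-calculus, so I expect the main work to be the careful choice of the nested conic cutoffs and the bookkeeping of orders in the asymptotic composition formulae, rather than any genuine obstacle.
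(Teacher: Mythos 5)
Your argument is correct and is essentially the same as the paper's: both start from a cutoff $\psi$ with $\psi(x,D)u\in\cS(\R)$, build a microlocal (left) parametrix to isolate a piece of $u$ that is already Schwartz, and then dispose of the remaining term by a disjoint-conic-support (equivalently, $\Gamma^{-\infty}$ remainder) argument. The only cosmetic difference is bookkeeping: you first record $\chi(x,D)u\in\cS(\R)$ and split $u=\chi(x,D)u+(\Id-\chi(x,D))u$, whereas the paper writes $u=\Psi'\Psi u+Ru$ with $R$ having symbol that is Schwartz on the relevant truncated cone and splits accordingly; these are interchangeable formulations of the same microlocal parametrix step. One small wording nit: when the asymptotic expansion of $\tilde\psi\#p\#(1-\chi)$ vanishes term by term you should conclude the symbol lies in $\Gamma^{-\infty}(\R)$ (hence the operator maps $\cS'(\R)\to\cS(\R)$), not that the symbol itself is zero; you implicitly use the correct conclusion, so this is only a matter of phrasing.
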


We will also use the following characterization of the Schwartz class.
\begin{proposition}{\rm (\cite{hormanderhyper})}\label{pro16bis}
For $u\in\cS^\prime(\R)$ we have $WF(u)=\emptyset$ if and only if $u\in\cS(\R)$.
\end{proposition}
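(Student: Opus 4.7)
The plan is to prove the two implications separately; the forward direction is immediate from the mapping properties of $\Gamma^0$-operators on $\cS(\R)$, while the converse requires a finite covering of the unit circle in phase space followed by a parametrix construction for a globally elliptic operator.

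For the easy direction, if $u\in\cS(\R)$ take $\psi\equiv 1\in\Gamma^0(\R)$; then $|\psi(x,\xi)|=1\geq C(1+|x|+|\xi|)^0$ with $C=1$, so $\psi$ is non-characteristic at every $(x_0,\xi_0)\neq 0$, and $\psi(x,D)u=u\in\cS(\R)$. Thus $WF(u)=\emptyset$.

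For the converse, assume $WF(u)=\emptyset$. For each $(x_0,\xi_0)$ in the unit circle $S^1\subset\R^2$ I can, by definition, choose $\psi_{(x_0,\xi_0)}\in\Gamma^0(\R)$ non-characteristic at $(x_0,\xi_0)$ with $\psi_{(x_0,\xi_0)}(x,D)u\in\cS(\R)$. Each conic neighborhood $V_{(x_0,\xi_0),\epsilon}$ intersects $S^1$ in an open set, and by compactness of $S^1$ I extract a finite subcover, obtaining symbols $\psi_1,\ldots,\psi_N\in\Gamma^0(\R)$ and a radius $R>0$ such that at every $(x,\xi)$ with $|x|+|\xi|>R$ at least one $|\psi_j(x,\xi)|\geq c>0$. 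I then set
\[
\Psi(x,\xi)=\sum_{j=1}^N|\psi_j(x,\xi)|^2+\chi(x,\xi),
\]
where $\chi\in C^\infty_c(\R^2)$ is non-negative and equals $1$ on $\{|x|+|\xi|\leq R\}$. Then $\Psi\in\Gamma^0(\R)$ with $\Psi(x,\xi)\geq c'>0$ everywhere, so $\Psi$ is globally elliptic of order $0$. Correspondingly, the operator $A=\sum_{j=1}^N \psi_j(x,D)^*\psi_j(x,D)+\chi(x,D)$, whose left symbol coincides with $\Psi$ modulo $\Gamma^{-1}(\R)$, is a globally elliptic operator of order $0$. Moreover $Au\in\cS(\R)$: each $\psi_j(x,D)u\in\cS(\R)$ by construction and $\psi_j(x,D)^*:\cS(\R)\to\cS(\R)$, while $\chi(x,D)$ is a regularizing operator because $\chi\in\cS(\R^2)$.

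To conclude, I invoke the standard parametrix construction for globally elliptic operators in the Shubin calculus (see \cite{hormanderhyper}): there exists $Q\in\Gamma^0(\R)$ such that $QA=\Id+T$ with $T:\cS'(\R)\to\cS(\R)$ regularizing. Writing $u=QAu-Tu$, I use that $Q$ maps $\cS(\R)$ into itself to get $QAu\in\cS(\R)$, while $Tu\in\cS(\R)$ by construction, whence $u\in\cS(\R)$. The one non-trivial ingredient is the existence of the elliptic parametrix $Q$, obtained via an asymptotic summation in $\bigcap_k\Gamma^{-k}(\R)$; this is a standard but technical fact about the global calculus, which I would quote rather than redo.
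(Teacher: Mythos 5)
The paper does not actually prove Proposition~\ref{pro16bis}: it is cited from H\"ormander's \emph{Quadratic hyperbolic operators} without argument, so there is no ``paper proof'' against which to compare. Assessing your proposal on its own merits: the two-implication strategy is sound and is essentially the standard argument. The forward direction with $\psi\equiv 1$ is fine. The converse via covering $S^1$, passing to a finite subcover, and assembling the globally elliptic operator $A=\sum_j\psi_j(x,D)^*\psi_j(x,D)+\chi(x,D)$ with a left parametrix is exactly the right idea, and all the individual steps (adjoints of $\Gamma^0$ operators preserve $\cS$, $\chi(x,D)$ is regularizing when $\chi\in C^\infty_c\subset\cS(\R^2)$, the left symbol of $A$ differs from $\Psi$ only in $\Gamma^{-1}$, so $A$ is elliptic, $QA=\Id+T$ with $T$ smoothing) are correct. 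Quoting the existence of the parametrix is reasonable, since the paper itself treats this as a known ingredient.

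One small imprecision to fix: the truncated cone $V_{(x_0,\xi_0),\epsilon}$ requires $|z|>\epsilon^{-1}$, so in general it does \emph{not} intersect $S^1$ at all. What you actually use is that the angular sets $\{\,\omega\in S^1: |\omega-(x_0,\xi_0)|<\epsilon_{(x_0,\xi_0)}\,\}$ form an open cover of $S^1$; after extracting a finite subcover $\{\omega: |\omega-\omega_j|<\epsilon_j\}_{j=1}^N$, the radius $R$ should be taken as $\max_j \epsilon_j^{-1}$, so that for any $(x,\xi)$ with $|(x,\xi)|>R$ the direction $(x,\xi)/|(x,\xi)|$ lies in some angular patch and $(x,\xi)\in V_{\omega_j,\epsilon_j}$ for that $j$. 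With that rephrasing the covering step is airtight and the rest of the argument goes through as written.
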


In fact, results similar to those in Proposition \ref{pro16} hold for more general classes of operators. The following case will be important in the following. \par
Consider the class $\tilde{\Gamma}^m_\delta(\R)$ of symbols $p(x,\xi)$ satisfying the following estimates:
\begin{equation}\label{med12}
|\partial^\alpha_\xi\partial^\beta_x p(x,\xi)|\leq
C_{\alpha\beta}\langle \xi\rangle^{m-\alpha}\langle x\rangle^{m-\beta+\delta\alpha},\qquad\forall\alpha,\beta\in\mathbb{N},\ (x,\xi)\in \R^2,
\end{equation}
for some $m\in\R$, $0\leq \delta<1$. For $t\in\R$, we write $\langle t\rangle=(1+t^2)^{1/2}$. Notice that $\Gamma^0(\R)\subset\tilde{\Gamma}^0(\R)$. These classes are a special case of the Weyl-H{\"o}rmander classes $S(M,g)$, with weight $M(x,\xi)=\langle x\rangle^m\langle \xi\rangle^m$ and metric $g_{x,\xi}=\frac{dx^2}{\langle x\rangle^2}+\frac{d\xi^2}{\langle x\rangle ^{-2\delta}\langle \xi\rangle^2}$. The usual symbolic calculus with full asymptotic expansions works for these classes (cf.\ \cite[Chapter XVIII]{hormanderIII}), because the so-called Planck function $h(x,\xi)=\langle \xi\rangle ^{-1}\langle x\rangle^{\delta-1}$ satisfies $h(x,\xi)\leq C(1+|x|+|\xi|)^{\delta-1}$ and $\delta-1<0$.  \par
A symbol $p\in \tilde{\Gamma}^m_\delta(\R)$ is called hypoelliptic at $(x_{0},\xi_{0})\not=(0,0)$  if for some $\epsilon>0$ it satisfies
\begin{equation}\label{med13}
|\partial^\alpha_\xi\partial^\beta_x p(x,\xi)|\leq
C_{\alpha\beta}|p(x,\xi)|\langle \xi\rangle^{-\alpha}\langle x\rangle^{-\beta+\delta\alpha},\quad\forall\alpha,\beta\in\mathbb{N},\quad (x,\xi)\in V_{(x_0,\xi_0),\epsilon}.
\end{equation}
and
\begin{equation}\label{med13g}
|p(x,\xi)|\geq C\langle x\rangle^{m^\prime}\langle \xi\rangle ^{m^\prime}\quad\forall  (x,\xi)\in V_{(x_0,\xi_0),\epsilon},
\end{equation}
for some $m^\prime\in\R$, $C>0$.
\begin{proposition}\label{pro17}  Let $p(x,\xi)$ satisfy \eqref{med12}, \eqref{med13} and \eqref{med13g}.
If $(x_0,\xi_0)\not\in WF(p(x,D)u)$ then $(x_0,\xi_0)\not\in WF(u)$.
\end{proposition}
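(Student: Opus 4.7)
The plan is to build a microlocal parametrix for $p(x,D)$ at $(x_0,\xi_0)$ within the Weyl--H\"ormander calculus, and then read off the conclusion. Since $\tilde{\Gamma}^m_\delta(\R)$ is a class $S(M,g)$ whose Planck function $h(x,\xi)=\langle\xi\rangle^{-1}\langle x\rangle^{\delta-1}$ satisfies $h\leq C(1+|x|+|\xi|)^{\delta-1}$ with $\delta-1<0$, the calculus admits full asymptotic expansions; this is the only fact from the general theory I would invoke.

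First I would choose cutoffs $\chi,\tilde{\chi}\in\Gamma^0(\R)$ supported in $V_{(x_0,\xi_0),\epsilon}$, with $\tilde\chi\equiv 1$ on $\mathrm{supp}\,\chi$ and $\chi\equiv 1$ on a slightly smaller conic neighborhood $V_{(x_0,\xi_0),\epsilon'}$. Setting $q_0(x,\xi):=\tilde\chi(x,\xi)/p(x,\xi)$, which is well defined thanks to \eqref{med13g}, the hypoellipticity estimates \eqref{med13} together with the quotient rule give
\[
|\partial^\alpha_\xi\partial^\beta_x q_0(x,\xi)|\leq C_{\alpha\beta}\langle\xi\rangle^{-m'-\alpha}\langle x\rangle^{-m'-\beta+\delta\alpha},
\]
so $q_0\in\tilde{\Gamma}^{-m'}_\delta(\R)$. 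The standard successive-correction scheme then yields $q\sim\sum_{j\geq 0}q_j\in\tilde{\Gamma}^{-m'}_\delta(\R)$ with
\[
q(x,D)\,p(x,D)=\chi(x,D)+R,
\]
where one gains a factor of the Planck function $h$ at each step and $R$ is globally smoothing, i.e.\ $R\colon\cS'(\R)\to\cS(\R)$ continuously.

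Now set $v:=p(x,D)u$; by hypothesis $(x_0,\xi_0)\notin WF(v)$. The analogue of Proposition \ref{pro2.1bisbis} for the class $\tilde{\Gamma}^{-m'}_\delta(\R)$, obtained by the same proof, implies $(x_0,\xi_0)\notin WF(q(x,D)v)$. Since $q(x,D)v=\chi(x,D)u+Ru$ and $Ru\in\cS(\R)$ has empty wave front set, this forces $(x_0,\xi_0)\notin WF(\chi(x,D)u)$. Finally, because $\chi\equiv 1$ on $V_{(x_0,\xi_0),\epsilon'}$ the symbol $\chi$ is non-characteristic at $(x_0,\xi_0)$, so an application of Proposition \ref{pro16} with symbol $\chi$ and distribution $u$ yields $(x_0,\xi_0)\notin WF(u)$, as desired.

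The principal technical obstacle is the parametrix construction itself: one must verify both that $1/p$ is a bona fide symbol in $\tilde{\Gamma}^{-m'}_\delta(\R)$ on $\mathrm{supp}\,\tilde\chi$ (this is precisely what conditions \eqref{med13} and \eqref{med13g} encode) and that the successive corrections $q_j$ truly improve by a factor of the Planck function $h$ at each step, which is where the structural assumption $\delta<1$ enters. Everything else is bookkeeping within the Weyl--H\"ormander calculus for the metric $g$ indicated after \eqref{med12}.
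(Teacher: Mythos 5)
Your argument is correct and follows exactly the route the paper indicates but does not write out: conditions \eqref{med13} and \eqref{med13g} give $\tilde\chi/p\in\tilde{\Gamma}^{-m'}_\delta(\R)$ on a truncated cone, the Weyl--H\"ormander calculus for the metric $g$ closes because $h\leq C(1+|x|+|\xi|)^{\delta-1}$ with $\delta<1$, and the successive corrections produce a microlocal parametrix from which the conclusion follows via Propositions \ref{pro16} and the $\tilde\Gamma^m_\delta$-analogue of Proposition \ref{pro2.1bisbis}. This is precisely the standard parametrix construction the paper delegates to \cite[Lemma 3.1]{Hormander:1} and \cite[Lemma 1.1.4, Proposition 1.1.6]{nicola}.
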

The proof is standard, since the assumptions imply the
existence of a microlocal parametrix $Q=q(x,D)$,
$q\in\tilde{\Gamma}^{-m}_\delta(\R)$ (see e.g. \cite[Lemma
3.1]{Hormander:1} and \cite[Lemma 1.1.4 and Proposition
1.1.6]{nicola}).\par

Finally, we will also make use of the the class $\Gamma^m_{1,\delta}(\R)$ of smooth functions $p\in C^\infty(\R^2)$ such that
\begin{equation}\label{pro17bis}
|\partial^\alpha_\xi \partial^\beta_x p(x,\xi)|\leq C_{\alpha\beta} (1+|x|+|\xi|)^{m-\alpha+\delta\beta},\qquad\forall\alpha,\beta\in\mathbb{N},\ (x,\xi)\in\R^2,
\end{equation}
for some $m\in\R$, $0\leq \delta< 1$. We have the following result.
\begin{proposition}\label{pro18}
Let $p\in \Gamma^{m}_{1,\delta}(\R)$. Then $WF(p(x,D) u)\subset WF(u)$.
\end{proposition}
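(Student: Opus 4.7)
The plan is to mimic the classical pseudolocality argument for pseudodifferential operators, adapted to the global Shubin-type setting. The key enabling fact is that $\Gamma^m_{1,\delta}(\R)$ admits a full symbolic calculus (composition, parametrices, asymptotic expansions) because $0\le\delta<1$ forces the Planck function $h(x,\xi)=(1+|x|+|\xi|)^{\delta-1}$ to $0$ at infinity; I will invoke this calculus as a black box, exactly as the authors do for Proposition \ref{pro17}. Unraveling the definition, given $(x_0,\xi_0)\notin WF(u)$, I pick $\psi\in\Gamma^0(\R)$ non-characteristic at $(x_0,\xi_0)$ with $\psi(x,D)u\in\cS(\R)$, and the aim is to produce some $\phi\in\Gamma^0(\R)$ non-characteristic at $(x_0,\xi_0)$ with $\phi(x,D)p(x,D)u\in\cS(\R)$.

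The next step is to produce a microlocal parametrix of $\psi$: by its ellipticity on a conic neighborhood $V$ of $(x_0,\xi_0)$, a standard Neumann-series construction delivers $q\in\Gamma^0(\R)$ with
\[
q(x,D)\psi(x,D)=I+r(x,D),
\]
where $r$ has essential support disjoint from some smaller conic neighborhood $V'\subset V$ of $(x_0,\xi_0)$. Now I choose $\phi\in\Gamma^0(\R)$ non-characteristic at $(x_0,\xi_0)$ with essential support contained in $V'$. Composing the above identity with $\phi\#p$ on the left yields, at the symbol level,
\[
\phi\#p \;=\; (\phi\#p\#q)\#\psi \;-\; \phi\#p\#r,
\]
and I set $a:=\phi\#p\#q\in\Gamma^m_{1,\delta}(\R)$, so that $a(x,D)$ maps $\cS(\R)$ into itself.

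The heart of the argument is that $\phi\#p\#r$ is globally regularizing: by the standard propagation of essential supports under composition, the essential support of $\phi\#p\#r$ is contained in the intersection of those of $\phi$ and of $r$, which is empty, so this symbol lies in $\Gamma^{-\infty}$ and the associated operator maps $\cS'(\R)\to\cS(\R)$. Applying both sides to $u$,
\[
\phi(x,D)p(x,D)u \;=\; a(x,D)\bigl(\psi(x,D)u\bigr) \;-\; (\phi\#p\#r)(x,D)u,
\]
and the right-hand side lies in $\cS(\R)$: the first term because $\psi(x,D)u\in\cS(\R)$ and $a(x,D)$ preserves $\cS(\R)$, the second by smoothing. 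This gives $\phi(x,D)p(x,D)u\in\cS(\R)$ with $\phi$ non-characteristic at $(x_0,\xi_0)$, hence $(x_0,\xi_0)\notin WF(p(x,D)u)$.

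The main obstacle is not conceptual but bookkeeping: one must verify that every manipulation above (the parametrix construction, the composition formulae with remainder control, the propagation of essential supports, and the $\cS\to\cS$ continuity of operators of order $m$) remains valid for the mixed-order class $\Gamma^m_{1,\delta}(\R)$, in which regularity in $x$ is weaker than in the symmetric Shubin class $\Gamma^m(\R)$. This is exactly what the Weyl--H\"ormander theory of \cite[Chapter XVIII]{hormanderIII} delivers under $\delta<1$, so once those tools are in place the argument is mechanical.
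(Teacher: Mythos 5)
Your argument is correct and follows essentially the same route as the paper's: both proofs pick a non-characteristic $\psi$ with $\psi(x,D)u\in\cS(\R)$, build a microlocal parametrix to write $u$ (modulo an operator whose symbol is Schwartz in a cone around $(x_0,\xi_0)$) as $\Psi'\Psi u$, localize with a cutoff symbol supported in that cone, and use the composition calculus for $\Gamma^m_{1,\delta}(\R)$ together with disjointness of essential supports to kill the remainder term. The only difference is presentational: the paper packages the parametrix step directly as the decomposition $u=\Psi'\Psi u+Ru$, while you spell out the Neumann-series construction $q\#\psi=1+r$ before specializing.
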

The proof is again standard and omitted for the sake of brevity.\par
Finally we define, for future references, the following class of functions in $\R$. We set $S^m(\R)$, $m\in\R$, for the the space of smooth functions satisfying the estimates
\begin{equation}\label{do4}
|f^{(\alpha)}(x)|\leq C_{\alpha}\langle x\rangle^{m-\alpha},\qquad \forall \alpha\in\N,\ x\in\R.
\end{equation}

As already observed in the Introduction,  it is also useful to
deal with the {\it Weyl quantization}, defined as
\[
p^w(x,D)= (2\pi)^{-1}\int_{\R} e^{i(x-y)\xi}
p\Big(\frac{x+y}{2},\xi\Big) u(y)\,dy\,d\xi.
\]
For example, an operator with a symbol $a(x,\xi)$ as in \eqref{1.4} in the standard quantization can be always re-written as an operator with Weyl symbol $p(x,\xi)$ given by the formula \eqref{1.5}.\\
The
main feature of Weyl quantization is its symplectic
invariance: if $\chi:\R^2\to\R^2$ is a linear symplectic
map, there is a unitary operator $U$ in $L^2(\R)$ which is
also an isomorphism of $\cS(\R)$ into itself and of
$\cS'(\R)$ into itself (in fact a metaplectic operator),
such that
\begin{equation}\label{med10}
(p\circ \chi )^w(x,D)=U^{-1} p^w(x,D) U.
\end{equation}
Moreover, in the definition of the global wave-front set it would be
equivalent to use the Weyl quantization, which implies at once that
\begin{equation}\label{med10bis}
(x_0,\xi_0)\not\in WF(u)\Longleftrightarrow \chi(x_0,\xi_0)\not\in WF (U u),
\end{equation}
where $U$ is the operator associated to $\chi$ as in \eqref{med10}. \par

\section{Factorization of the operator}
In this section we provide a convenient microlocal factorization of the operator $P$ in Theorem \ref{mainteo}, which reduces the proof of that result to the case of first order operators.\par
Let $j\in\N$ and consider the elementary symmetric functions defined by
\begin{align*}
\sigma_0(\xi_1,\ldots,\xi_j)&=1\\
\sigma_1(\xi_1,\ldots,\xi_j)&=-\sum_{1\leq j\leq j} \xi_j\\
\sigma_2(\xi_1,\ldots,\xi_j)&=\sum_{1\leq j<k\leq j} \xi_j\xi_k\\
\ldots&\\
\sigma_j(\xi_1,\ldots,\xi_j)&=(-1)^j\xi_1\cdots\xi_j.
\end{align*}
\begin{proposition}\label{pro1}
Let $r_1\geq0$, $r_2\geq0$ and $a_j(x)$, $j=0,\ldots, r_1$, and $\xi_j(x)$, $j=r_1,\ldots,r_1+r_2$, be smooth functions, defined  in some open subset of $\R$. Then we have
\begin{multline*}
\sum_{k=0}^{r_1}  a_k(x)D^{r_1-k}\prod_{j=r_1}^{r_1+r_2}(D-\xi_{j}(x))\\ =\sum_{k=0}^{r_1+r_2} (\sum_{l+h=k\atop l\leq r_1, h\leq r_2} a_l(x)\sigma_h(\xi_{r_{1}+1}(x),\ldots,\xi_{r_{1}+r_{2}}(x))+R_k(x))D^{r_1+r_2-k}
\end{multline*}
where $R_0=R_1=0$ and, for $k\geq 2$,
\begin{multline}
R_k\in{\rm span}_{\C}\{a_l \xi_{j_1}^{(m_1)}\cdots\xi_{j_h}^{(m_h)},\ l+h+m_1+\ldots +m_h=k,\ 1\leq h\leq k-1,\\ m_1+\ldots +m_h>0,\ r_1+1\leq j_1<\ldots<j_h\leq r_1+r_2\}.
\end{multline}
\end{proposition}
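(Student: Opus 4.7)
The natural approach is induction on $r_2$. The base case $r_2=0$ is immediate: the product is empty, $\sigma_0=1$, and every $R_k$ must vanish since the span condition requires at least one $\xi$-factor while none is available. Likewise the claims $R_0=R_1=0$ for general $r_2$ follow from the constraints $l+h+\sum m_i=k$ together with $h\geq 1$ and $\sum m_i\geq 1$, which force $k\geq 2$.

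For the inductive step, I would multiply the expansion at stage $r_2$ on the right by $(D-\xi_{r_1+r_2+1}(x))$. Writing $b_k(x)$ for the coefficient of $D^{r_1+r_2-k}$ at stage $r_2$, the coefficient of $D^{r_1+r_2+1-K}$ in the new product works out to
\[
b_K - b_{K-1}\,\xi_{r_1+r_2+1} - \sum_{k=0}^{K-2}\binom{r_1+r_2-k}{K-1-k}(-i)^{K-1-k}\,b_k\,\xi_{r_1+r_2+1}^{(K-1-k)},
\]
where the last sum arises from the Leibniz-type identity $D^N\circ f(x)=\sum_{j=0}^{N}\binom{N}{j}(-i)^j f^{(j)}(x)D^{N-j}$ (obtained by iterating $[D,f]=-if'$), used to commute $D^{r_1+r_2-k}$ past $\xi_{r_1+r_2+1}$. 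Combined with the elementary-symmetric recurrence
\[
\sigma_h(\xi_{r_1+1},\ldots,\xi_{r_1+r_2+1})=\sigma_h(\xi_{r_1+1},\ldots,\xi_{r_1+r_2})-\xi_{r_1+r_2+1}\,\sigma_{h-1}(\xi_{r_1+1},\ldots,\xi_{r_1+r_2}),
\]
which is consistent with the sign convention $\sigma_j=(-1)^j\xi_1\cdots\xi_j$ (so that $\prod_j(z-\xi_j)=\sum_h\sigma_h z^{j-h}$), this shows that the portion of $b_K-b_{K-1}\xi_{r_1+r_2+1}$ coming from the symmetric-polynomial parts of $b_K$ and $b_{K-1}$ reproduces exactly the new principal coefficient $\sum_{l+h=K}a_l\sigma_h(\xi_{r_1+1},\ldots,\xi_{r_1+r_2+1})$.

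Everything else is absorbed into the new remainder $R_K^{(r_2+1)}$, which splits into three types of contributions: the old $R_K^{(r_2)}$ itself, already of the prescribed form, with the range of admissible $j$-indices simply extended to $r_1+r_2+1$; the term $-R_{K-1}^{(r_2)}\,\xi_{r_1+r_2+1}$, which appends a new undifferentiated factor $\xi_{r_1+r_2+1}$ while preserving the positivity $\sum m_i>0$ inherited from $R_{K-1}^{(r_2)}$; and the entire commutator sum over $k\leq K-2$, where the constraint $k\leq K-2$ forces $K-1-k\geq 1$ and thereby introduces at least one derivative of $\xi_{r_1+r_2+1}$.

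The main obstacle, and really the only one, is the bookkeeping: for each of these three contributions (and within the commutator sum, for the symmetric-polynomial part of $b_k$ and the $R_k^{(r_2)}$ part separately) one has to verify that the total weight $l+h+\sum m_i$ equals $K$, that $1\leq h\leq K-1$, that $\sum m_i>0$, and that the indices $j_1<\cdots<j_h$ can be arranged strictly increasingly. The last point is automatic because $\sigma_h$ is symmetric and the newly introduced index $r_1+r_2+1$ always occupies the top slot. No single check is difficult, but the cases must be written out carefully.
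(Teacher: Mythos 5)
Your proposal is correct and essentially reproduces the paper's own argument: induction on $r_2$, right-multiplication by $(D-\xi_{r_1+r_2+1})$, the Leibniz commutation of $D^{r_1+r_2-k}$ past $\xi_{r_1+r_2+1}$, the recurrence $\sigma_h^{(r_2+1)}=\sigma_h^{(r_2)}-\xi_{r_1+r_2+1}\sigma_{h-1}^{(r_2)}$ for the principal part, and absorption of the remaining terms into the new $R_K$. Your accounting of the three error contributions is if anything slightly more explicit than the paper's (which, for instance, appears to drop the $-\xi_{r_1+r_2+1}R_{k-1}$ term from its displayed formula while silently treating it as an admissible error).
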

\begin{proof}
Let us apply induction on $r_2$. The conclusion obviously holds with $R_j=0$ for every $j$ if $r_2=0$. Suppose it holds for $r_2$. The we have
\begin{multline}\label{pro21}
\sum_{k=0}^{r_1}  a_j(x)D^{r_1-k}(D-\xi_{r_1+1}(x))\cdots(D-\xi_{r_1+{r_2+1}}(x))=\\ \sum_{k=0}^{r_1+r_2} \Big( \sum_{l+h=k\atop l\leq r_1, h\leq r_2} a_l(x)\sigma_h(\xi_1(x),\ldots,\xi_k(x)+R_k(x)\Big)D^{r_1+r_2-k}(D-\xi_{r_1+r_2+1}(x)),
\end{multline}
with $R_k$ as in the statement. Let
$b_{j}(x)=\sum\limits_{l+h=j\atop l\leq r_1, h\leq r_2}
a_l(x)\sigma_h(\xi_{r_{1}+1}(x),\ldots,\xi_{r_{1}+r_{2}}(x))$.
By Leibniz' formula the expression in the right-hand side
of \eqref{pro21} reads \begin{multline}
\sum_{k=0}^{r_1+r_2+1} \Big( b_{k}(x)-\xi_{r_1+r_2+1}(x)b_{k-1}(x)+R_k(x)-\\
\sum_{j=1}^{k-1}\binom{r_1+r_2+1-k-j}{j}(b_{k-j-1}(x)+R_{k-j-1}(x))D^j \xi_{r_1+r_2+1}(x)\Big) D^{r_1+r_2+1-k}.
\end{multline}
Now, we have
\[
 b_k(x)-\xi_{r_1+r_2+1}(x)b_{k-1}(x)=\sum\limits_{l+h=k\atop l\leq r_1, h\leq r_2+1} a_l(x)\sigma_h(\xi_{r_{1}+1}(x),\ldots,\xi_{r_{1}+r_{2}+1}(x)),
 \] whereas the terms $R_k(x)$, $b_{k-j-1}(x)D^j \xi_{r_1+r_2+1}(x)$ and $R_{k-j-1}(x)D^j \xi_{r_1+r_2+1}(x)$ are admissible errors.
\end{proof}
\begin{proposition}\label{proweyl}
Let $r_1\geq0$, $r_2\geq0$ and $a_j(x)$, $j=0,\ldots,r_1$, and $b_h(x)$, $h=0,\ldots,r_2$ be smooth functions on $\R$. Consider the operator $P$ with Weyl symbol
\begin{equation}\label{weyl}
p(x,\xi)=\sum_{j=0}^{r_1} a_j(x) \xi^{r_1-j}\cdot \sum_{h=0}^{r_2} b_h(x) \xi^{r_2-h}.
\end{equation}
Then $P$ in the standard quantization has symbol
\[
q(x,\xi)= \sum_{k=0}^{r_1+r_2}\Big(R_k+\sum_{l+h=k\atop l\leq r_{1}, h\leq r_{2}} a_l b_h\Big)\xi^{r_1+r_2-k},
\]
where
\[
R_k\in{\rm span}_{\C}\{ (a_l b_h)^{(\nu)},\ 1\leq \nu\leq k,\ l+h=k-\nu\}.
\]
\end{proposition}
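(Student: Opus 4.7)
The plan is to invert the Weyl-to-standard symbol formula \eqref{1.5} and read off the coefficients of $q$ in descending powers of $\xi$. Since $\partial_\xi$ and $D_x$ act on disjoint variables and therefore commute, inverting \eqref{1.5} yields
\begin{equation*}
q(x,\xi) = \sum_{\gamma \geq 0} \frac{1}{\gamma!}\Big(\frac{1}{2}\Big)^\gamma \partial_\xi^\gamma D_x^\gamma p(x,\xi).
\end{equation*}
Because $p$ is a polynomial in $\xi$ of total $\xi$-degree $r_1+r_2$, this expansion terminates at $\gamma = r_1+r_2$ and holds as an exact identity, so no asymptotic or convergence issues arise.

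Next I would write $p(x,\xi) = \sum_{l,h} a_l(x) b_h(x)\, \xi^{r_1+r_2-l-h}$, using the convention $a_l = 0$ for $l>r_1$ and $b_h = 0$ for $h>r_2$. The $\gamma=0$ contribution to $q$ is simply $p$, whose coefficient of $\xi^{r_1+r_2-k}$ is $\sum_{l+h=k,\, l\leq r_1,\, h\leq r_2} a_l b_h$, which is exactly the main term in the statement. For $\gamma \geq 1$, and provided $r_1+r_2-l-h \geq \gamma$, a direct differentiation gives
\begin{equation*}
\partial_\xi^\gamma D_x^\gamma\big(a_l b_h\, \xi^{r_1+r_2-l-h}\big) = \frac{(r_1+r_2-l-h)!}{(r_1+r_2-l-h-\gamma)!}\, D_x^\gamma(a_l b_h)\, \xi^{r_1+r_2-l-h-\gamma},
\end{equation*}
with the expression being zero otherwise. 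Setting $\nu = \gamma$ and $l+h = k-\nu$, and using $D_x^\nu = (-i)^\nu (d/dx)^\nu$, the total $\gamma \geq 1$ contribution to the coefficient of $\xi^{r_1+r_2-k}$ equals
\begin{equation*}
R_k = \sum_{\nu=1}^{k}\frac{(-i)^\nu}{\nu!\, 2^\nu} \sum_{\substack{l+h=k-\nu\\ l\leq r_1,\, h\leq r_2}} \frac{(r_1+r_2-k+\nu)!}{(r_1+r_2-k)!}\,(a_l b_h)^{(\nu)}.
\end{equation*}

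This is manifestly a complex linear combination of terms $(a_l b_h)^{(\nu)}$ with $1 \leq \nu \leq k$ and $l+h = k-\nu$, which is the asserted span. There is no real obstacle here: the argument is a finite Taylor-series computation in $\xi$, and the only points needing care are the factorial and sign bookkeeping in the Weyl-to-standard conversion and the observation that polynomiality of $p$ in $\xi$ makes the identity exact.
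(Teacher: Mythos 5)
Your proof is correct and takes essentially the same route as the paper: invert the Weyl-to-left-symbol formula \eqref{1.5} to get $q(x,\xi)=\sum_{\gamma\ge0}\frac{1}{\gamma!}\big(\tfrac{1}{2}\big)^\gamma\partial_\xi^\gamma D_x^\gamma p(x,\xi)$ (equivalently $\sum_{\nu\ge0}\frac{1}{\nu!}(-i/2)^\nu\partial_x^\nu\partial_\xi^\nu p$, which is what the paper writes, modulo a typo in its upper summation limit), expand $p$ in powers of $\xi$, and read off that the $\gamma=0$ term gives $\sum_{l+h=k}a_lb_h$ while each $\gamma=\nu\ge1$ term contributes a constant multiple of $(a_lb_h)^{(\nu)}$ with $l+h=k-\nu$ to the coefficient of $\xi^{r_1+r_2-k}$. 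The paper's proof is simply terser, asserting the result "follows at once" from the same inversion formula without carrying out the bookkeeping you did.
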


\begin{proof}
From \eqref{weyl}  we have $p(x,\xi)=\sum_{k=0}^{r_1+r_2} \sum_{l+h=k} a_l b_h\, \xi^{r_1+r_2-k}$. On the other hand,
for the standard symbol $q(x,\xi)$ we have the formula
\[
q(x,\xi)= \sum_{\nu=0}^{r_1+r_2-k} \frac{1}{\nu!}\Big(-\frac{i}{2}\Big)^\nu \partial_x^\nu\partial_\xi^\nu p(x,\xi),
\]
so that the desired result follows at once.
\end{proof}
\begin{lemma}\label{lemma1}
Let $\xi_{k}\in\mathbb{C}$, $k=1,\ldots, r_{1}+r_{2}$,
\[
Q(\xi)= \prod_{r_1+1\leq k\leq r_1+r_2} (\xi-\xi_k)
\]
and
\[
Q_j(\xi)= \prod_{1\leq k\leq r_1+r_2\atop k\not=j} (\xi-\xi_k),
\]
for  $j=r_1+1,\ldots,r_1+r_2$.\par Consider the square
matrix $A$ of size $r_1+r_2$, whose $j$th column for $1\leq
j\leq r_1$ is made of the coefficients of the polynomials
$\xi^{r_1-j}Q(\xi)$, whereas the $j$th column for
$r_1+1\leq j\leq r_1+r_2$ is made of the coefficients of
the polynomial $Q_j$ (hence in the $j$th row there are the
coefficients of $x^{r_1+r_2-j}$ which appear in those
polynomials). Assume that $\xi_k\not=\xi_j$ when $j\not=k$.
Then the matrix $A$ is invertible and its inverse
$A^{-1}=B=(B_{jk})$ is given by the following formula:
\begin{equation}\label{inversa1}
B_{jk}=\sum_{h=1}^{r_1}\frac{\xi_h^{r_1+r_2-k}}{\prod\limits_{1\leq l\leq r_1+r_2\atop l\not=h}(\xi_h-\xi_l)}\sigma_{j-1}(\xi_1,\ldots,\xi_{h-1},\xi_{h+1},\ldots,\xi_{r_1})
\end{equation}
{\rm if}\  $1\leq j\leq r_1,\ 1\leq k\leq r_1+r_2$, whereas
\begin{equation}\label{inversa2}
B_{jk}=\frac{\xi_j^{r_1+r_2-k}}{\prod\limits_{1\leq l\leq r_1+r_2\atop l\not=j}(\xi_j-\xi_l)}\end{equation}
{\rm if}\ $ r_1+1\leq j\leq r_1+r_2,\ 1\leq k\leq r_1+r_2$.
\end{lemma}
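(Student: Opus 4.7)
The plan is to interpret $A$ as the change-of-basis matrix, in the $(r_1+r_2)$-dimensional space $V=\{g\in\C[\xi]:\deg g\le r_1+r_2-1\}$, from the basis formed by the column polynomials to the monomial basis $e_i(\xi)=\xi^{r_1+r_2-i}$. Concretely, set $f_j(\xi)=\xi^{r_1-j}Q(\xi)$ for $1\le j\le r_1$ and $f_j(\xi)=Q_j(\xi)$ for $r_1+1\le j\le r_1+r_2$. By construction $f_j=\sum_i A_{ij}\,e_i$, so the entries of $B=A^{-1}$ are exactly the coordinates of the monomials in the $f_j$-basis: $e_i=\sum_j B_{ji}\,f_j$. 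It therefore suffices to show that $\{f_j\}$ is a basis and to expand each $e_i$ in this basis.

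For linear independence, suppose $\sum_j c_j f_j=0$. Evaluating at $\xi=\xi_k$ for $k>r_1$ annihilates every $f_j$ with $j\le r_1$ (each carries the factor $Q(\xi_k)=0$) and every $f_j=Q_j$ with $r_1<j\neq k$, leaving $c_k Q_k(\xi_k)=0$. Since the $\xi_l$ are distinct, $Q_k(\xi_k)\neq 0$, so $c_k=0$ for all $k>r_1$. The relation reduces to $Q(\xi)\sum_{j\le r_1}c_j\xi^{r_1-j}=0$, which, since $Q\not\equiv 0$, forces $c_j=0$ for $j\le r_1$ as well. Hence $A$ is invertible.

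To obtain $B$, I apply Lagrange interpolation at the nodes $\xi_1,\ldots,\xi_{r_1+r_2}$. With $P(\xi)=\prod_{k=1}^{r_1+r_2}(\xi-\xi_k)$ and $\widetilde Q_h(\xi)=P(\xi)/(\xi-\xi_h)$, every $g\in V$ satisfies
\[
g(\xi)=\sum_{h=1}^{r_1+r_2}\frac{g(\xi_h)}{P'(\xi_h)}\,\widetilde Q_h(\xi),\qquad P'(\xi_h)=\prod_{l\neq h}(\xi_h-\xi_l).
\]
For $h>r_1$ one has $\widetilde Q_h=Q_h=f_h$ outright. For $h\le r_1$ I factor $\widetilde Q_h(\xi)=Q(\xi)\prod_{k\le r_1,\,k\ne h}(\xi-\xi_k)$ and expand the second factor by Vieta, in the paper's sign convention $\prod_k(\xi-y_k)=\sum_j\sigma_j(y)\,\xi^{\deg-j}$, to get $\widetilde Q_h(\xi)=\sum_{j=1}^{r_1}\sigma_{j-1}(\xi_1,\ldots,\xi_{h-1},\xi_{h+1},\ldots,\xi_{r_1})\,f_j(\xi)$.

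Substituting the two cases into the Lagrange formula rearranges it into $g=\sum_j M_j(g)\,f_j$ with $M_j(g)=\sum_{h=1}^{r_1}\frac{g(\xi_h)}{P'(\xi_h)}\,\sigma_{j-1}(\xi_1,\ldots,\widehat{\xi_h},\ldots,\xi_{r_1})$ for $j\le r_1$ and $M_j(g)=g(\xi_j)/P'(\xi_j)$ for $j>r_1$. Taking $g=e_i$, i.e.\ $g(\xi)=\xi^{r_1+r_2-i}$, reads off $B_{ji}=M_j(e_i)$ and reproduces exactly \eqref{inversa1} and \eqref{inversa2}. The only delicate point is keeping track of the paper's sign convention $\sigma_h=(-1)^h e_h^{\text{std}}$ when invoking Vieta's formulas; this is bookkeeping rather than a genuine obstacle.
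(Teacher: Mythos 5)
Your proof is correct and follows essentially the same route as the paper's: both expand $\xi^{r_1+r_2-k}$ in the Lagrange basis at the nodes $\xi_1,\ldots,\xi_{r_1+r_2}$ and then rewrite each Lagrange polynomial $\widetilde Q_h$ in terms of the column polynomials of $A$, reading off the coefficients via Vieta for $h\le r_1$ and trivially for $h>r_1$. Your change-of-basis framing and the explicit linear-independence check are harmless reformulations of the paper's step of solving $AX=Y$ with $Y$ ranging over the standard basis.
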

\begin{proof}
To compute the $k$th column of the inverse matrix, we have to solve the system $AX=Y$, where $Y=[0,\ldots,0,1,0\ldots,0]^t$ is the $k$th element of the canonical basis of $\C^{r_1+r_2-1}$. Now, the vector $X=[a_1,\ldots,a_{r_1+r_2}]^t$ is  a solution if and only if \begin{equation}\label{4a}
R(\xi) Q(\xi)+\sum_{j=r_1+1}^{r_1+r_2} a_j Q_j(\xi)=\xi^{r_1+r_2-k},
\end{equation}
with $R(\xi)=\sum_{j=1}^{r_1}a_j \xi^{r_1-j}$.\par
To solve \eqref{4a}, we express the right-hand side in terms of the interpolating Lagrange polynomials associated to the points $\xi_j$, $j=1,\ldots,r_1+r_2$, which are
\[
L_h(\xi):=\prod_{1\leq k\leq r_1+r_2\atop k\not=h} (\xi-\xi_k), \qquad h=1,\ldots, r_1+r_2
\]
(hence $L_h(x)=Q_h(x)$ for $h=r_1+1,\ldots,r_1+r_2$).  We have
\begin{equation}\label{interpolazione}
\xi^{r_1+r_2-k}=\sum_{h=1}^{r_1+r_2-1} \frac{\xi_h^{r_1+r_2-k}}{L_h(\xi_h)}L_h(\xi),
\end{equation}
hence we are reduced to solve \eqref{4a} when the right-hand side is replaced by each Lagrange polynomial. On the other hand, one sees immediately that the equation
\[
R(\xi) Q(\xi)+\sum_{j=r_1+1}^{r_1+r_2} a_j Q_j(\xi)=L_h(x),
\]
when $1\leq h\leq r_1$ has the solution
$R(\xi)=\prod\limits_{1\leq l\leq r_1\atop l\not=h}
(\xi-\xi_l)$, and $a_j=0$ for every
$j=r_1+1,\ldots,r_1+r_2$, whereas when $r_1+1\leq h\leq
r_1+r_2$ the solution is given by $R(x)=0$ and $a_h=1$,
$a_j=0$ if $r_1+1\leq j\leq r_1+r_2$ with $j\not=h$. \par
By linearity and \eqref{interpolazione} we obtain that the
solution of \eqref{4a} is given by
\[
\begin{cases}
R(\xi)=\sum_{j=1}^{r_1}  \frac{\xi_h^{r_1+r_2-k}}{L_h(\xi_h)}\prod\limits_{1\leq l\leq r_1\atop l\not=h} (\xi-\xi_l)\\ a_j=\frac{\xi_j^{r_1+r_2-k}}{L_j(\xi_j)},\qquad j=r_1+1,\ldots,r_1+r_2.
 \end{cases}
\]
We can now come back to the original system $AX=Y$, and we obtain the desired form for the inverse matrix.
\end{proof}

It follows from the proof of the following proposition that the result in Lemma \ref{lemma1} continues to hold if one just require that $\xi_j\not=\xi_k$ for the indices $j\not=k$ such that $j>r_1$ or $k>r_1$. In fact, as we will see, a simplification of the factors $\xi_h-\xi_l$, with $1\leq k\not= l\leq r_1$ occurs in \eqref{inversa2}.

\begin{lemma}\label{lemma2}
Let $\xi_j(x)$, $j=1,\ldots,r_1+r_2$ be smooth functions for $x$ large, such that $\xi_j(x)\not=\xi_k(x)$ if $r_1+1\leq j\not=k\leq r_1+r_2$,
\begin{equation}\label{fo1}
\xi_j(x)=O(x),\qquad {\rm if}\ 1\leq j\leq r_1+r_2
\end{equation}
and
\begin{equation}\label{fo2}
|\xi_j(x)-\xi_k(x)|\gtrsim x\qquad {\rm if}\ \ 1\leq j\leq r_1\ {\rm and}\ r_1+1\leq k\leq r_1+r_2
\end{equation}
as $x\to+\infty$. If we set $\xi_j=\xi_j(x)$ in Lemma \ref{lemma1}, for the inverse matrix $B_{jk}=B_{jk}(x)$ given there, the following asymptotic formulae hold as $x\to+\infty$:
\begin{equation}\label{fo30}
B_{jk}(x)=O(x^{j-k})
\end{equation}\label{fo3}
for $1\leq j\leq r_1$, $1\leq k\leq r_1+r_2$, whereas
\begin{equation}\label{fo4}
B_{jk}(x)=O\Bigg(\frac{|\xi_j(x)|^{r_1+r_2-k}}{x^{r_1}\prod\limits_{r_1+1\leq l\leq r_1+r_2\atop l\not=j}|\xi_j(x)-\xi_l(x)|}\Bigg)
\end{equation}
for $r_1+1\leq j\leq r_1+r_2$, $1\leq k\leq r_1+r_2$.
\end{lemma}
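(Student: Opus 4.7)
The estimate \eqref{fo4} for $r_1+1 \leq j \leq r_1+r_2$ will follow directly from the explicit formula \eqref{inversa2}: splitting
\[
\prod_{l\neq j}(\xi_j - \xi_l) = \prod_{l=1}^{r_1}(\xi_j - \xi_l) \cdot \prod_{r_1+1\leq l\leq r_1+r_2,\ l\neq j}(\xi_j - \xi_l),
\]
and using \eqref{fo2} to bound $\bigl|\prod_{l=1}^{r_1}(\xi_j - \xi_l)\bigr| \gtrsim x^{r_1}$ yields \eqref{fo4} at once.

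For \eqref{fo30} with $1 \leq j \leq r_1$, the plan is to reinterpret $B_{jk}$ as a coefficient of a Lagrange (or Hermite) interpolation polynomial and bound it via a contour integral after rescaling. Splitting the denominator in \eqref{inversa1} in the same way, one recognizes
\[
B_{jk} = \sum_{h=1}^{r_1} \frac{g_k(\xi_h)\,\sigma_{j-1}(\xi_1,\ldots,\widehat{\xi}_h,\ldots,\xi_{r_1})}{\prod_{l\leq r_1,\ l\neq h}(\xi_h - \xi_l)},\qquad g_k(\xi) := \frac{\xi^{r_1+r_2-k}}{\prod_{l>r_1}(\xi - \xi_l)},
\]
as precisely the coefficient of $\xi^{r_1-j}$ in the Lagrange interpolant $L_k$ of $g_k$ at $\xi_1,\ldots,\xi_{r_1}$ when those nodes are distinct. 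The matrix $A$ remains invertible even if some $\xi_h$ with $h\leq r_1$ coincide (one verifies this by evaluating any linear relation among the columns of $A$ at $\xi=\xi_k$, $k>r_1$); perturbing the $\xi_h$ slightly and passing to the limit, the identification of $B_{jk}$ as such an interpolation coefficient extends to the general case, with $L_k$ becoming a Hermite interpolation polynomial when nodes coincide.

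Next, rescale via $\xi = x\eta$ and set $\eta_l := \xi_l(x)/x$: by \eqref{fo1}, $|\eta_l| \leq C$ for a fixed constant $C$, and by \eqref{fo2}, $|\eta_h - \eta_l| \gtrsim 1$ for $h \leq r_1 < l$. The rescaled function $\tilde g_k(\eta) := x^{-(r_1-k)} g_k(x\eta) = \eta^{r_1+r_2-k}/\prod_{l>r_1}(\eta - \eta_l)$ has interpolant $\tilde L_k$ at $\eta_1,\ldots,\eta_{r_1}$ satisfying $L_k(\xi) = x^{r_1-k}\tilde L_k(\xi/x)$. Matching the coefficients of $\xi^{r_1-j}$ gives $B_{jk} = \tilde c_j\,x^{j-k}$, where $\tilde c_j$ is the coefficient of $\eta^{r_1-j}$ in $\tilde L_k$. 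The task is reduced to showing $\tilde c_j = O(1)$ uniformly in $x$.

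To bound $\tilde c_j$, fix a small $\varepsilon>0$ (less than half the constant implicit in \eqref{fo2}) and let $\gamma$ be the positively oriented boundary of $\Omega := \{|z| < 2C\} \setminus \bigcup_{l>r_1} \{|z - \eta_l| < \varepsilon\}$. This region contains all nodes $\eta_h$ and excludes the poles of $\tilde g_k$, so $\tilde g_k$ is analytic on $\overline{\Omega}$. Expanding $(P(\eta)-P(z))/(\eta-z) = \sum_{i=1}^{r_1}\alpha_i(z)\,\eta^{r_1-i}$, with $P(z) = \prod_{h\leq r_1}(z-\eta_h)$ and $\alpha_i(z) = \sum_{s=0}^{i-1}\sigma_s(\eta_1,\ldots,\eta_{r_1})\,z^{i-s-1}$, and using the standard contour representation $\tilde L_k(\eta) = (2\pi i)^{-1}\oint_\gamma \tilde g_k(z)[P(\eta)-P(z)]/[P(z)(\eta-z)]\,dz$, one extracts
\[
\tilde c_j = \frac{1}{2\pi i}\oint_\gamma \frac{z^{r_1+r_2-k}\,\alpha_j(z)}{\prod_{l=1}^{r_1+r_2}(z - \eta_l)}\,dz.
\]
On the outer circle $|z|=2C$, each $|z-\eta_l|\geq C$, so the integrand and length are $O(1)$. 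On the inner boundary of $\bigcup_{l>r_1}\{|z - \eta_l|<\varepsilon\}$ (well-defined even when the disks overlap, since a boundary point lies outside every open ball in the union), one has $|z-\eta_l|\geq\varepsilon$ for every $l>r_1$ and $|z-\eta_h|\gtrsim 1$ for $h\leq r_1$, while the total length is at most $2\pi\varepsilon r_2$; the integrand is therefore $O(\varepsilon^{-r_2})$ and the contribution is $O(\varepsilon^{1-r_2})$, a fixed constant. Hence $\tilde c_j = O(1)$, which proves \eqref{fo30}. The main obstacle is the possible coincidence of the nodes $\xi_h$ with $h\leq r_1$ and the lack of any quantitative separation among the poles $\xi_l$ with $l>r_1$; the contour integral representation circumvents both, because the estimate depends only on the separation between nodes and poles provided by \eqref{fo2}.
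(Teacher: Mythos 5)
Your treatment of \eqref{fo4} coincides with the paper's: split the product $\prod_{l\not=j}(\xi_j-\xi_l)$ and use \eqref{fo2} to bound the $r_1$ factors with $l\leq r_1$ from below by $x^{r_1}$. Your proof of \eqref{fo30} is also correct but takes a genuinely different route. The paper's argument is purely algebraic: after writing \eqref{inversa1} over the common denominator $\prod_{1\leq l<l'\leq r_1}(\xi_l-\xi_{l'})\cdot\prod_{l\leq r_1}Q(\xi_l)$, it shows that the numerator is an alternating polynomial in $\xi_1,\ldots,\xi_{r_1}$ (hence divisible by the Vandermonde factor), divides it out, and reads off $O(x^{j-k})$ from a degree count combined with $\prod_{l\leq r_1}|Q(\xi_l)|\gtrsim x^{r_1r_2}$, which follows from \eqref{fo2}. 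You instead recognize $B_{jk}$ as the coefficient of $\xi^{r_1-j}$ in the Lagrange interpolant of $g_k(\xi)=\xi^{r_1+r_2-k}/\prod_{l>r_1}(\xi-\xi_l)$ at the nodes $\xi_1,\ldots,\xi_{r_1}$, rescale by $x$ so that the claim becomes a uniform $O(1)$ bound for the rescaled coefficient $\tilde c_j$, and obtain that bound from a contour integral on a fixed contour separating the rescaled nodes $\eta_h$ ($h\leq r_1$) from the rescaled poles $\eta_l$ ($l>r_1$). Both proofs must cope with the fact that \eqref{inversa1} is a priori singular when two of the $\xi_h$ with $h\leq r_1$ coincide: the paper removes the singularity by the divisibility step, while you note that the contour formula for $\tilde c_j$ is manifestly regular there and extend the identity $B_{jk}=\tilde c_j\,x^{j-k}$ from the generic case by continuity. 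The paper's algebraic route is entirely elementary and also produces as a byproduct the remark following Lemma~\ref{lemma1} on the invertibility of $A$ under the weaker hypothesis; your contour argument avoids all the antisymmetry and degree bookkeeping and handles possibly overlapping excised disks around the poles particularly gracefully. Two minor points worth tidying: you also need $\varepsilon<C$ so the excised disks stay strictly inside $\{|z|<2C\}$, and the continuity step does not actually require invoking Hermite interpolation, since the contour representation of $\tilde c_j$ itself extends by continuity from the generic configuration.
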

\begin{proof}
Consider first the entries with $1\leq j\leq r_1$. With the notation of Lemma \ref{lemma1} we can rewrite \eqref{inversa1} as
\begin{equation}\label{fo5}
B_{jk}=\frac{\sum\limits_{h=1}^{r_1}(-1)^{h-1}\xi_h^{r_1+r_2-k}\sigma_{j-1}(\xi_1,\ldots,\xi_{h-1},\xi_{h+1},\ldots,\xi_{r_1})\prod\limits_{1\leq l\leq r_1\atop l\not=h}Q(\xi_l)\prod\limits_{1\leq l<l^\prime\leq r_1\atop l,l^\prime\not=h} (\xi_l-\xi_{l^\prime})}{\prod\limits_{1\leq l<l^\prime\leq r_1} (\xi_l-\xi_{l^\prime})\prod\limits_{1\leq l\leq r_1}Q(\xi_l)}
\end{equation}
We claim that the numerator of this fraction, as a polynomial in $\xi_1,\ldots,\xi_{r_1+r_2}$ is divisible by the product $\prod\limits_{1\leq l<l^\prime\leq r_1} (\xi_l-\xi_{l^\prime})$ which arises in the denominator. To this end, it is sufficient to show that the numerator vanishes if $\xi_\mu=\xi_\nu$, for every $1\leq \mu<\nu\leq r_1$. It is clear that all the terms in the above sum with $h\not=\mu,\nu$ vanish if $\xi_\mu=\xi_\nu$, because of the presence of the factor $\xi_\mu-\xi_\nu$. Let us prove that also the sum of the two terms corresponding to $h=\mu$ and $h=\nu$ vanishes. Due to the symmetry of $\sigma_{j-1}$, it suffices to show that
\[
(-1)^{\mu-1} \prod\limits_{1\leq l<l^\prime\leq r_1\atop l,l^\prime\not=\mu} (\xi_l-\xi_{l^\prime})+(-1)^{\nu-1} \prod\limits_{1\leq l<l^\prime\leq r_1\atop l,l^\prime\not=\nu} (\xi_l-\xi_{l^\prime})=0.
\]
This amounts to prove that
\[
(-1)^{\mu-1+r_1-\nu} \prod\limits_{1\leq l\leq r_1\atop l\not=\mu,\nu} (\xi_l-\xi_\nu)+(-1)^{\nu-1+r_1-\mu-1} \prod\limits_{1\leq l\leq r_1\atop l\not=\mu,\nu} (\xi_l-\xi_\mu)=0.
\]
where we took  into account that $\mu<\nu$.  But this is true, because the two products that arise in the last formula coincide when $\xi_\mu=\xi_\nu$.  \par
This proves the claim and shows, by a limiting argument, that the the matrix $A$ in Lemma \ref{lemma1} is still invertible if one just require $\xi_j\not=\xi_k$ for the indices $j\not=k$ such that $j>r_1$ or $k>r_1$.\par
Now, by dividing the numerator of \eqref{fo5} by $\prod\limits_{1\leq l<l^\prime\leq r_1} (\xi_l-\xi_{l^\prime})$ we get a polynomial in $\xi_1,\ldots,\xi_{r_1+r_2}$ of degree\footnote{Observe that the polynomials $\prod\limits_{1\leq l<l^\prime\leq r_1\atop l,l^\prime\not=h} (\xi_l-\xi_{l^\prime})$ and $\prod\limits_{1\leq l<l^\prime\leq r_1} (\xi_l-\xi_{l^\prime})$ have degree $\frac{(r_1-1)(r_1-2)}{2}$ and $\frac{r_1(r_1-1)}{2}$ respectively.}
\[
(r_1+r_2-k)+(j-1)+(r_1-1)r_2+\frac{(r_1-1)(r_1-2)}{2}-\frac{r_1(r_1-1)}{2}=r_1r_2-k+j.
\]
Hence, when we compose such a polynomial with the functions $\xi_j=\xi_j(x)$, by \eqref{fo1} we get a function which is $O(x^{r_1r_2-k+j})$ as $x\to+\infty$. On the other hand, using \eqref{fo2} we have $|\prod\limits_{1\leq l\leq r_1}Q(\xi_l)|\gtrsim x^{r_1r_2}$, so that we deduce \eqref{fo30}.\par
Finally, let us prove \eqref{fo4}. This follows at once from \eqref{inversa2} taking into account that, if $r_1+1\leq j\leq r_1+r_2$,
\[
\prod\limits_{1\leq l\leq r_1+r_2\atop l\not=j}|\xi_j(x)-\xi_l(x)|\gtrsim x^{r_1}\prod\limits_{r_1+1\leq l\leq r_1+r_2\atop l\not=j}|\xi_j(x)-\xi_l(x)|,
\]
 which is a consequence of \eqref{fo2}.
\end{proof}

\begin{proposition}\label{pro19}
Let $p(x,\xi)$ be as in \eqref{1.5}. Let $\xi_1(x),\ldots,\xi_{r_1+r_2}(x)$, $r_1+r_2=m$, be its roots, defined for $x$ large, and assume \eqref{fo1}, \eqref{fo2}, as well as
\begin{equation}\label{fo2bis}
|\xi_j(x)-\xi_k(x)|\gtrsim \max\{|\xi_j(x)|,|\xi_k(x)|,x^{-1+\epsilon}\},\quad {\rm for}\ j,k=r_1+1,\ldots, r_1+r_2,
\end{equation}
for some $\epsilon>0$, as $x\to+\infty$.
Let $P$ be the operator with Weyl symbol $p(x,\xi)$. We can write, for $x$ large,
\begin{equation}\label{med}
P=\sum_{k=0}^{r_1} \eta_k(x) D^{r_1-k} \prod_{j=r_1+1}^{r_1+r_2} (D-\eta_j(x))+R,
\end{equation}
where $\eta_k\in S^{j}(\R)$, $k=1,\ldots, r_1$, $b_0(x)=1$, $\eta_j\in S^{1}(\R)$, $j=r_1+1,\ldots r_1+r_2$ (see \eqref{do4}),
\begin{equation}\label{med18}
\eta_j(x)=\sigma_j( \xi_1(x),\ldots, \xi_{r_1}(x))+O(x^{j-2})\quad{\rm for}\  j=1,\ldots, r_1
\end{equation}
and
\begin{equation}\label{med19}
\eta_j(x)=\xi_j(x)+O(x^{-1})\quad{\rm for}\  j=r_1+1,\ldots, r_1+r_2
\end{equation}
as $x\to+\infty$, whereas $R$ is a differential operator of order $r_1+r_2$ whose coefficients are in $S^{m}(\R)$ and rapidly decreasing as $x\to+\infty$, together with their derivatives.
\end{proposition}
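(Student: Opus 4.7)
The plan is to construct the coefficients $\eta_{l}(x)$ by a Borel-style asymptotic iteration, matching the standard-quantization symbols of both sides of \eqref{med} order by order, with Lemmas \ref{lemma1} and \ref{lemma2} providing the explicit inverse of the linearised matching system at each step, together with the decay estimates needed to see that the iteration gains powers of $x^{-1}$.

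First I would convert $P$ to the standard quantization, writing $P=\sum_{k=0}^{m}q_{k}(x)D^{m-k}$ with $m=r_{1}+r_{2}$; by the Weyl-to-standard conversion (in the spirit of Proposition \ref{proweyl}) the leading part of $q_{k}$ is $\sigma_{k}(\xi_{1}(x),\dots,\xi_{m}(x))$, the correction being of strictly lower order in $x$. For the unknown $\eta_{l}$, $l=1,\dots,r_{1}$, with $\eta_{0}\equiv 1$, and $\eta_{j}$, $j=r_{1}+1,\dots,m$, Proposition \ref{pro1} expresses the standard symbol of the product on the right of \eqref{med} as $\sum_{k}(s_{k}(\eta)+R_{k}(\eta))\xi^{m-k}$, where
\[
s_{k}(\eta)=\sum_{l+h=k,\,l\le r_{1},\,h\le r_{2}}\eta_{l}(x)\,\sigma_{h}(\eta_{r_{1}+1},\dots,\eta_{r_{1}+r_{2}}),
\]
and $R_{k}(\eta)$ collects the terms containing at least one derivative of some $\eta_{r_{1}+j}$. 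Matching against $q_{k}$ yields the nonlinear system $s_{k}(\eta)+R_{k}(\eta)=q_{k}(x)$, $k=0,\dots,m$.

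I would solve this system iteratively. As initial guess take $\eta_{j}^{(0)}=\xi_{j}$ for $j>r_{1}$ and $\eta_{l}^{(0)}=\sigma_{l}(\xi_{1},\dots,\xi_{r_{1}})$ for $1\le l\le r_{1}$; by the usual factorization $\prod_{j=1}^{m}(\xi-\xi_{j})=\sum_{k}\sigma_{k}(\xi_{1},\dots,\xi_{m})\xi^{m-k}$ one has $s_{k}(\eta^{(0)})=\sigma_{k}(\xi_{1},\dots,\xi_{m})$, so the residual $q_{k}-s_{k}(\eta^{(0)})-R_{k}(\eta^{(0)})$ is smaller than $q_{k}$ by a factor $x^{-2+\mu}$ for some $\mu<1$ (the shift $\mu$ reflecting that derivatives of Puiseux-type symbols of order $x$ gain only a factor $x^{-1+\mu}$). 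To correct, I linearise $s_{k}(\eta)$ around the current approximation $\eta^{(n)}$; the resulting Jacobian is precisely the matrix $A$ of Lemma \ref{lemma1} with $\xi_{k}$ replaced by $\eta_{k}^{(n)}$, which is invertible under \eqref{fo2}, \eqref{fo2bis}, with explicit inverse \eqref{inversa1}, \eqref{inversa2}. The growth bounds \eqref{fo30}, \eqref{fo4} of Lemma \ref{lemma2} then guarantee that the correction $\eta^{(n+1)}-\eta^{(n)}$ gains a fixed positive power of $x^{-1}$ over the residual at step $n$. Summing the resulting formal corrections asymptotically, in the sense of Borel, produces honest symbols $\eta_{l}\in S^{l}(\R)$ and $\eta_{j}\in S^{1}(\R)$ satisfying the expansions \eqref{med18}, \eqref{med19}; the remainder $R$ in \eqref{med}, being the difference between $P$ and a suitable truncation of the asymptotic sum, has coefficients rapidly decreasing as $x\to+\infty$ together with all derivatives.

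The main technical obstacle is the accounting of powers of $x$ in the iteration. The inverse matrix entries in \eqref{fo4} may grow whenever the pairwise separations between the last $r_{2}$ roots saturate the bound $x^{-1+\epsilon}$, so at each step the linear inversion loses powers of $x$ that must be strictly compensated by the decay gained from the Weyl subprincipal term and from differentiating the Puiseux-type coefficients. The strict positivity $\epsilon>0$ in the hypothesis \eqref{fo2bis} is precisely what makes this net balance strictly favourable and drives the iteration forward in the asymptotic sense.
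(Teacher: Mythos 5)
Your proposal uses the same machinery and overall route as the paper: converting Weyl to standard quantization (Proposition \ref{proweyl}), expressing the standard symbol of the factored operator via Proposition \ref{pro1}, matching coefficients to obtain a nonlinear system, inverting the resulting linear structure with Lemmas \ref{lemma1} and \ref{lemma2}, and finishing with Borel summation. The main organizational difference is that the paper makes the substitution $\eta_j=a_j+\zeta_j$ (resp.\ $\eta_j=\xi_j-\zeta_j$) once and for all and rewrites the system as a fixed-point equation $\zeta=-A^{-1}X-A^{-1}Y-A^{-1}Z-A^{-1}R'$ with $A$ \emph{frozen} at the actual roots $\xi_j$, then prescribes the explicit formal-power-series ansatz \eqref{med6}--\eqref{med7} and checks the triangular structure (the coefficient of $x^{j-2-k/p}$ depends only on the $c_{i,\mu}$ with $\mu<k$). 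You instead propose a genuine Newton iteration, re-linearising $s_k$ at each new approximant $\eta^{(n)}$.

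Two caveats. First, the Jacobian you describe is that of $s_k$ alone, not of $s_k+R_k$; the paper handles the $R$-contribution by splitting $R''_k=Y_k+Z_k$ and pushing the $\zeta$-dependent part $Y_k$ into the quadratic (small) remainder rather than into the linearisation. If you want a true Newton step you must either include $\partial R_k/\partial\eta_j$ in the Jacobian (complicating the explicit inverse from Lemma \ref{lemma1}) or justify throwing it away, which is essentially the paper's bookkeeping. Second, if $A$ is evaluated at $\eta^{(n)}$ rather than at $\xi$, the asymptotics in Lemma \ref{lemma2} have to be re-established for the updated arguments; this can be done since $\eta^{(n)}\to\xi$ rapidly, but it is extra work that the frozen-linearisation formulation avoids. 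Your last paragraph correctly isolates the accounting of powers of $x$ as the crux, but the claim that each step gains ``a fixed positive power of $x^{-1}$'' is too coarse; the actual gain is a power of $x^{-1/p}$, which is why the paper works with explicit Puiseux-graded formal series rather than a generic geometric decay argument.
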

\begin{proof}
The standard symbol of  $P$ is given in Proposition
\ref{proweyl}. On the other hand, the symbol of the first
operator in the right-hand side of \eqref{med} is computed
in Proposition \ref{pro1}. By equating the coefficients of
the terms of the same order we deduce that, modulo rapidly
decreasing functions as $x\to+\infty$, it must be
\begin{equation}\label{system}
\sum_{k=0}^{r_1+r_2}\sum_{l+h=k\atop l\leq r_1, h\leq r_2} a_l \sigma _h (\xi_{r_1+1},\ldots,\xi_{r_1+r_2})+R^\prime_k= \sum_{l+h=k\atop l\leq r_1, h\leq r_2}\eta_l(x)\sigma_h(\eta_{r_1+1},\ldots \eta_{r_1+r_2})+R^{\prime\prime}_k
\end{equation}
for $k=1,\ldots, r_1+r_2$, where the $\eta_j$'s are unknown, $a_l=a_l(x)=\sigma_l(\xi_1(x),\ldots,\xi_{r_1}(x))$,
 \[
 R^\prime_k\in {\rm span}_{\C}\{ (a_l \sigma _h (\xi_{r_1+1},\ldots,\xi_{r_1+r_2}))^{(\nu)},\ 1\leq \nu\leq k,\ l+h=k-\nu\},
\]
and
\begin{multline*}
R^{\prime\prime}_k\in{\rm span}_{\C}\{a_l \eta_{j_1}^{(m_1)}\cdots\eta_{j_h}^{(m_h)},\ l+h+m_1+\ldots +m_h=k,\ 1\leq h\leq k-1,\\ m_1+\ldots +m_h>0,\ r_1+1\leq j_1<\ldots<j_h\leq r_1+r_2\}.
\end{multline*}
We set
\begin{equation}\label{sost}
\begin{cases} \eta_j(x)= a_j(x)+\zeta_j(x)&{\rm for}\ j=1,\ldots, r_1\\ \eta_j(x)= \xi_j(x)-\zeta_j(x)&{\rm for}\ j=r_1+1,\ldots, r_1+r_2,
\end{cases}
\end{equation}
in the right-hand side of \eqref{system}. By isolating the terms which are of order $0$ or $1$
with respect to $\zeta=(\zeta_1,\ldots, \zeta_{r_1+r_2})$ we can write
\[
\sum_{l+h=k\atop l\leq r_1, h\leq r_2}\eta_l\sigma_h(\eta_{r_1+1},\ldots \eta_{r_1+r_2})=\sum_{l+h=k\atop l\leq r_1, h\leq r_2} a_l \sigma _h (\xi_{r_1+1},\ldots,\xi_{r_1+r_2})+(A\zeta)_k+X_k,
\]
where $A$ is exactly the matrix described in Lemma
\ref{lemma1}. By multi-linearity we can also write
$R^{\prime\prime}_k=Y_k+Z_k$, where $Z_k$ is the term
constant with respect to the $\zeta_j$'s. The system
\eqref{system} hence  becomes
\begin{equation}\label{system2}
\zeta=-A^{-1} X-A^{-1}Y-A^{-1}Z-A^{-1} R^\prime.
\end{equation}
Let us prove that
\begin{equation}\label{med1}
(A^{-1} Z)_j=\begin{cases}
O(x^{j-2})&{\rm if}\ j=1,\ldots, r_1\\
O(x^{-1})&{\rm if }\ j=r_1+1,\ldots,r_1+r_2.
\end{cases}
\end{equation}
as $x\to+\infty$.
We have
\begin{multline}\label{med3}
Z_k\in{\rm span}_{\C}\{a_l \xi_{j_1}^{(m_1)}\cdots\xi_{j_h}^{(m_h)},\ l+h+m_1+\ldots +m_h=k,\ 1\leq h\leq k-1,\\ m_1+\ldots +m_h>0,\ r_1+1\leq j_1<\ldots<j_h\leq r_1+r_2\}.
\end{multline}
Hence, if $j\leq r_1$ and $A^{-1}=(B_{jk})$, we have $(A^{-1}Z)_j=\sum_{k=1}^{r_1+r_2}B_{jk} Z_k$, and by \eqref{med3}, \eqref{fo1} and \eqref{fo30}
\[
B_{jk} Z_k=O(x^{j-k}x^{l+h-m_1-\ldots-m_h})=O(x^{j-2(k-l-h)})=O(x^{j-2}),
\]
because  $k-l-h\geq1$.\\
 If $j\geq r_1+1$, since $\xi^{(m_\nu)}_{j_\nu}=O(\xi_{j_\nu}(x)x^{-m_\nu})$ and $m_1+\ldots +m_h=k-k-l$, by \eqref{med3} it suffices to prove that
\begin{equation}\label{med4}
\frac{a_l(x)x^{-k+h+l}|\xi_j(x)|^{r_1+r_2-k}\xi_{j_1}(x)\cdots\xi_{j_h}(x)}{x^{r_1}\prod\limits_{r_1+1\leq l\leq r_1+r_2\atop l\not=j}|\xi_j(x)-\xi_l(x)|
}=O(x^{-1}).
\end{equation}
By \eqref{fo2bis} we have $\xi_{j_\nu}(x)= O(|\xi_{j_\nu}-\xi_j|)$ if $j_\nu\not= j$ and $\xi_j(x)=O(|\xi_j(x)-\xi_l(x)|)$ and $x|\xi_j(x)-\xi_l(x)|\to+\infty$ if $j\not=l$, $l\geq r_1+1$. Hence we obtain
\begin{equation}\label{med5}
x^{-r_1+k-h-1}\cdot\frac{|\xi_j(x)|^{r_1+r_2-k}\xi_{j_1}(x)\cdots\xi_{j_h}(x)}{\prod\limits_{r_1+1\leq l\leq r_1+r_2\atop l\not=j}|\xi_j(x)-\xi_l(x)|
}=O(1)
\end{equation}
(this is easily verified by considering separately the cases when the number of factors in the numerator is less or greater than that of the denominator). On the other hand, by \eqref{fo1} we have $|a_l(x)|x^{-k+k+l}x^{-k+h+1}=O(x^{-2(k-l-h)+1})=O(x^{-1})$, which combined with \eqref{med5} gives \eqref{med4}. This proves \eqref{med1}.

Similarly one proves that \begin{equation}\label{med2}
(A^{-1} R^\prime)_j=\begin{cases}
O(x^{j-2})&{\rm if}\ j=1,\ldots, r_1\\
O(x^{-1})&{\rm if }\ j=r_1+1,\ldots,r_1+r_2
\end{cases}
\end{equation}
as $x\to+\infty$.\par
Now, \eqref{med1} and \eqref{med2} suggest to look for functions $\zeta_j$ with asymptotic expansion
\begin{equation}\label{med6}
\zeta_j(x)\sim \sum_{\mu=0}^{+\infty} c_{j,\mu} x^{j-2-\frac{\mu}{p}},\quad{\rm for}\ j=1,\ldots,r_1,
\end{equation}
\begin{equation}\label{med7}
\zeta_j(x)\sim \sum_{\mu=0}^{+\infty} c_{j,\mu} x^{-1-\frac{\mu}{p}},\quad{\rm for}\ j=r_1+1,\ldots,r_1+r_2,
\end{equation}
where $p$ is the integer for which \eqref{puiseaux} hold. In order for the system \eqref{system2} to be solvable by an iterative argument (in the sense of formal power series), it suffices to prove that the coefficient in $(A^{-1} X)_j$, respectively $(A^{-1} Y)_j$, of $x^{j-2-\frac{k}{p}}$, respectively $x^{-1-\frac{k}{p}}$, depends only on the $c_{j,\mu}$ with $\mu<k$. To this end, consider first the term $(A^{-1}X)_j$. By the very definition of $X_k$, it belongs to the complex span of
\[
\delta_l(x)\gamma_{j_1}(x)\cdots \gamma_{j_h}(x),
\]
with  $l+h=k,\ l\leq r_1,\ h\leq r_2$, where $\delta_l=a_l$ or $\delta_l=\zeta_l$, and $\gamma_{j_\nu}=\xi_{j_\nu}$ or $ \gamma_{j_\nu}=\zeta_{j_\nu}$, and the above product contains {\it at least  two factors
of type $\zeta$}. \par
Let $1\leq j\leq r_1$. We have $(A^{-1}X)_j=\sum_{k=1}^{r_1+r_2} B_{jk} X_k$. By \eqref{fo30}, \eqref{fo1}, \eqref{med6} and \eqref{med7} we get $B_{jk} X_k=O(x^{j-k} x^{l+h-4})=O(x^{j-4})$. More generally, the same argument shows that  for a fixed $\mu\geq0$ the coefficients $c_{i,\mu}$, $i=1,\ldots, r_1+r_2$, may appear in the asymptotic expansion of $(A^{-1}X)_j$ only in the terms of degree $\leq j-2-\frac{\mu}{p}-2$.\par
Let now $r_1+1\leq j \leq r_1+r_2$.  Let us prove $B_{jk} X_k=o(x^{-1})$.
It suffices to prove that
\begin{equation}\label{med4bis}
\frac{\delta_l(x)|\xi_j(x)|^{r_1+r_2-k}\gamma_{j_1}(x)\cdots\gamma_{j_h}(x)}{x^{r_1}\prod\limits_{r_1+1\leq l\leq r_1+r_2\atop l\not=j}|\xi_j(x)-\xi_l(x)|
}=o(x^{-1}),
\end{equation}
if $l+h=k$, $l\leq r_1$, $h\leq r_2$. We can suppose that, say, $\gamma_{j_h}=\zeta_{j_h}$. Assume furthermore that there exists $1\leq \nu\leq h-1$ such that $\gamma_{j_\nu}=\zeta_{j_\nu}$. Then  we have $\delta_l(x) x^{-k+h} \gamma_{j_h}(x)=O(x^{-1})$ and
\[
\frac{|\xi_j(x)|^{r_1+r_2-k}\gamma_{j_1}(x)\cdots\gamma_{j_{h-1}}(x)x^{-r_1+k-h}}{\prod\limits_{r_1+1\leq l\leq r_1+r_2\atop l\not=j}|\xi_j(x)-\xi_l(x)|
}=o(1),
\]
where we used the same arguments as in the proof of \eqref{med5} and the fact that $\zeta_{j_\nu}(x)/|\xi_j(x)-\xi_{j_\nu}(x)|=O(x^{-1}/|\xi_j(x)-\xi_{j_\nu}(x)|)=o(1)$. This gives \eqref{med4bis}.
If instead there is no such a $\nu$, it must be $\delta_l(x)=\zeta_l(x)$. Hence $\delta_l(x) x^{-k+h+1}=O(x^{-1})$ and
\[
\frac{|\xi_j(x)|^{r_1+r_2-k}\gamma_{j_1}(x)\cdots\gamma_{j_{h}}(x)x^{-r_1+k-h-1}}{\prod\limits_{r_1+1\leq l\leq r_1+r_2\atop l\not=j}|\xi_j(x)-\xi_l(x)|
}=o(1),
\]
which still gives \eqref{med4bis}. This proves that, if $r_1+1\leq j\leq r_1+r_2$, $(A^{-1} X)_j=o(x^{-1})$ as $x\to+\infty$.  More generally, the same argument shows that  for a fixed $\mu\geq0$ the coefficients $c_{i,\mu}$, $i=1,\ldots, r_1+r_2$, may appear in the asymptotic expansion of $(A^{-1} X)_j$ only in the terms of degree $<-1-\frac{\mu}{p}$. Similary one sees that the same is true for $(A^{-1} Y)_j$.\par
Hence, the system \eqref{system2} has a formal power series solutions in the form \eqref{med6}, \eqref{med7}, as $x\to+\infty$. Now, by a classical Borel-type argument, see e.g. \cite[Proposition 18.1.3]{hormanderIII}, one can construct functions $\zeta_j\in S^{j-2}(\R)$ if $1\leq j\leq r_1$, $\zeta_j\in S^{-1}(\R)$ if $r_1+1\leq j\leq r_1+r_2$, with the asymptotic expansions in \eqref{med6} and \eqref{med7} respectively.\par The functions $\eta_j(x)$ defined in \eqref{sost} for $x$ large and extended smoothly to zero for $x<0$ then fulfill  \eqref{med} for a convenient operator $R$ having the desired properties. This concludes the proof.
\end{proof}
\section{Proof of the main result (Theorem \ref{mainteo})}
\subsection{Sufficient Condition} Let $u$ be in $\cS^\prime(\R)$ with $Pu\in\cS(\R)$. Let us prove that, under the assumptions \eqref{separazione} and \eqref{condition} we have $u\in\cS(\R)$, i.e.  $WF(u)=\emptyset$ (Proposition \ref{pro16bis}).\par We use the factorization \eqref{fact} for the Weyl symbol $p(x,\xi)$, valid for large $|x|$, and define the complex constant $\lambda_j$, $j=1,\ldots,m$ by \eqref{fact1}. Let now $(x_0,\xi_0)\not=(0,0)$. If $(x_0,\xi_0)$ does not belong to any of the rays $\{t(1,\lambda_j),\, t\in\R\setminus\{0\}\}$, with $\lambda_j\in\R$, we see from \eqref{fact}, \eqref{fact1} that $p$ is non-characteristic at $(x_0,\xi_0)$, i.e. it satisfies the estimate \eqref{med11} for some $\epsilon>0$. Hence $(x_0,\xi_0)\not\in WF(u)$ by Proposition \ref{pro16}. \par
Let now $(x_0,\xi_0)$ lie on a ray $\{t(1,\lambda_j),\ t\in\R\setminus\{0\}\}$, for some $\lambda_j\in\R$. We can suppose that $(x_0,\xi_0)=(1,\lambda_j)$ or $(x_0,\xi_0)=(-1,-\lambda_j)$. We can further reduce to the case when $(x_0,\xi_0)=(1,0)$. In fact, suppose that $(x_0,\xi_0)=(1,\lambda_j)$, and consider the linear symplectic transformation $\chi(x,\xi)=(x,\xi+\lambda_j x)$. Let $U$ be the operator associated to $\chi$ via \eqref{med10}. By \eqref{med10bis}, in order to get $(x_0,\xi_0)\not\in WF(u)$ it suffices to prove that $(1,0)\not\in WF(U^{-1} u)$. Now, we have $(p\circ\chi)^w(x,D) U^{-1} u=U^{-1} Pu\in\cS(\R)$, so that our original problem is equivalent to a similar one with $(x_0,\xi_0)$ replaced by $(1,0)$ and the symbol $p(x,\xi)$ replaced by $(p\circ\chi)(x,\xi)$. The same holds for $(x_0,\xi_0)=(-1,-\lambda)$ if we perform the preliminar symplectic transformation $\chi(x,\xi)=(-x,-\xi)$.\par
After these transformations we get a symbol, which we will continue to call $p(x,\xi)$, which has a factorization as in \eqref{fact} for $x$ large, where the new roots $\xi_j(x)$ satisfy \eqref{fact1} and \eqref{puiseaux} for other values of $\lambda_j$, as well as \eqref{separazione} and \eqref{condition}. \par Suppose now $\lambda_{j}\not=0$ for $j=1,\ldots, r_1$, and $\lambda_j=0$ for $j=r_1+1,\ldots, r_1+r_2$, with $r_1+r_2=m$.  Accordingly, by \eqref{fact1}, \eqref{separazione} we have
\begin{equation}\label{med15}
|\xi_j(x)|\asymp x\quad{\rm for}\ 1\leq j\leq r_1,\qquad |\xi_j(x)|=o(x)\quad{\rm for}\ r_1+1\leq j\leq r_1+r_2,
\end{equation}
\begin{equation}\label{med16}
|\xi_j(x)-\xi_k(x)|\gtrsim \max\{|\xi_j(x)|,|\xi_k(x)|,x^{-1+\epsilon}\}\qquad{\rm for}\ r_1+1\leq j\leq r_1+r_2,
\end{equation}
as $x\to+\infty$. Let us verify that $(1,0)\not\in WF(u)$ if $Pu\in\cS(\R)$.\par We can apply Proposition \ref{pro19} and use the factorization in \eqref{med}. We claim that the symbol of the first factor, namely
$\sum_{k=0}^{r_1} \eta_k(x) \xi^{r_1-k}
$, is non-characteristic at $(1,0)$. In fact  by \eqref{med15} we have
\[
|\sum_{k=0}^{r_1} \sigma_j(\xi_1(x),\ldots,\xi_{r_1}(x)) \xi^{r_1-k}|=\prod_{j=1}^{r_1} |\xi-\xi_j(x)|\gtrsim x^{r_1},\quad{\rm for}\ (x,\xi)\in V_{(1,0),\epsilon}
\]
if $\epsilon$ is small enough. Now by \eqref{med18} we deduce that
\[
|\sum_{k=0}^{r_1} \eta_k(x) \xi^{r_1-k}|\gtrsim x^{r_1},\quad{\rm for}\ (x,\xi)\in V_{(1,0),\epsilon}
\]
which proves the claim. Hence, since $Pu\in\cS(\R)$ and $(1,0)\not\in WF(Ru)$, by Proposition \ref{pro16} and \eqref{med} we deduce that $(1,0)$ does not belong to the wave-front set of  $\prod_{j=r_1+1}^{r_1+r_2} (D-\eta_j(x))u$. Hence, to finish the proof it is sufficient to prove that,  for every $j=r_1+1,\ldots,r_1+r_2$, if $(1,0)\not\in WF((D-\eta_j(x))u)$ then $(1,0)\not\in WF(u)$. \par
Fix such a $j$. We have from \eqref{puiseaux} that
\begin{equation}\label{puiseaux2}
\xi_{j}(x)=\sum_{-
\infty<k\leq p-1} c_{j,k}x^{k/p}\qquad {\rm for}\ x\ {\rm large}.
\end{equation}
By \eqref{med19} we have an asymptotic expansion
\begin{equation}\label{do0}
\eta_{j}(x)=\sum_{-p<k\leq p-1} c_{j,k}x^{k/p}+O(x^{-1})\qquad {\rm for}\ x\ {\rm large}.
\end{equation}
By \eqref{condition}, there exists $k>-p$ such that $c_{j,k}\not\in\R$. Let $\nu\leq p-1$ be the greatest index for which this holds. Let $Q(x)$ be a smooth real-valued function, $Q(x)=0$ for $x<0$, and $Q_j(x)=\int _0^x \sum_{\nu<k\leq p-1} c_{j,k}t^{k/p}\, dt$ for $x$ large. We can write
\begin{equation}\label{do1}
D-\eta_j(x)= e^{i Q_j(x)} (D-\tilde{\eta}_{j}(x))\circ e^{-i Q_j(x)},
\end{equation}
where $\tilde{\eta}_{j}\in S^{1}(\R) $, and
\begin{equation}\label{do}
\tilde{\eta}_{j}(x)=\eta_{j}(x)-\sum_{\nu<k\leq p-1} c_{j,k}x^{k/p}=c_{j,\nu}x^{\nu/p}+o(x^{\nu/p})
\end{equation}
 for large $x$. \par
Now, we can regard $e^{\pm i Q_j(x)}$ as symbols (independent of $\xi$) in the class $ \Gamma^0_{1,\delta}(\R)$ with $\delta=1-1/p$ (see \eqref{pro17bis}). Hence, if $(1,0)\not\in WF((D-\eta_j(x))u)$, by Proposition \ref{pro18} we get $(1,0)\not\in WF((D-\tilde{\eta}_{j}(x))u)$. On the other hand,  the symbol $\xi-\tilde{\eta}_j(x)$ belongs to $\tilde{\Gamma}^1_\delta(\R)$ (see \eqref{med12}), and it is hypoelliptic at $(1,0)$ (see \eqref{med13},\eqref{med13g}). In fact, using \eqref{do} and the fact that $c_{j,\nu}\not\in\R$ we have
\[
|\partial^\alpha_\xi\partial^\beta_x (\xi-\tilde{\eta}_{j}(x))|\leq
C_{\alpha\beta}|\xi-\tilde{\eta}_{j}(x)|\langle \xi\rangle^{-\alpha}\langle x\rangle^{-\beta+\delta\alpha},\quad\forall\alpha,\beta\in\mathbb{N},\quad x>1,\ \xi\in\R,
\]
with $\delta=\max\{0,-\nu/p\}\leq 1-1/p<1$, and $|\xi-\tilde{\eta}_{j}(x)|\gtrsim |\xi|+x^{\nu/p}$ for  $x$ large and $\xi\in\R$. Hence, by Proposition \ref{pro17} we obtain $(1,0)\not\in WF(u)$, which concludes the proof.
 \subsection{Necessary Condition} Let us assume \eqref{separazione} but suppose that \eqref{condition} fails for some $j$. Let us prove that then  there exists $u\in\cS^\prime(\R)$, $u\not\in\cS(\R)$ such that $Pu\in\cS(\R)$. By the applying the same arguments as in the proof of the sufficient condition we can assume that \eqref{condition} fails, say, for $x\to+\infty$ and $j=r_1+r_2$, with $\lambda_{r_1+r_2}=0$. Hence \eqref{puiseaux2} and therefore \eqref{do0} hold with $j=r_1+r_2$ and $c_{r_1+r_2,k}\in\R$ for every $-p<k\leq p-1$.  Let us set
 \[
 Q(x)= \int_0^x \eta_{r_1+r_2}(t)\,dt,
 \]
 and let $u\in C^\infty(\R)$ such that $u(x)=0$ for $x<0$, $u(x)= e^{i Q(x)}$ for $x>1$. Then, by \eqref{do0} with $j=r_1+r_2$
  we have $x^{-c}\lesssim |u(x)|\lesssim x^{c}$ when $x>1$, for some $c>1$. Hence $u\in\cS^\prime(\R)$, $u\not\in\cS(\R)$. Moreover, $Pu=0$ for $x<0$, whereas for $x$ large we have $(D-\eta_{r_1+r_2}(x))u=0$ and therefore, from \eqref{med}, $Pu=Ru$. Now, we also have  $|\partial^\alpha u(x)|\lesssim x^{c+|\alpha|}$ when $x>1$, because $\eta_{r_1+r_2}\in S^1(\R)$, so that $Ru$ is rapidly decreasing as $x\to+\infty$ together with its derivatives. This implies that $Pu\in\cS(\R)$ and concludes the proof.
\section{Examples and remarks}
We give in the following some examples and remarks.
\begin{remark}\label{rem5.1}\rm
The assumption $c_{m,0}\not=0$ for the Weyl symbol of $P$
can be eliminated, provided we submit preliminarily $P$ to
the conjugation by a metaplectic operator $U$. Namely, let
$U$ be associated to the symplectic map
$\chi(x,\xi)=(x-\lambda \xi,\xi)$, $\lambda\in\R$. In view
of \eqref{med10}, the Weyl symbol of $\tilde{P}=U^{-1}PU$
is given by
\[
(p\circ \chi)(x,\xi)=\sum_{\alpha+\beta\leq m} c_{\alpha,\beta}(x+\lambda \xi)^\beta \xi^\alpha
\]
with new coefficient
\[
\tilde{c}_{m,0}=\sum_{\alpha+\beta= m} c_{\alpha,\beta}\lambda^\beta\not=0
\]
for a generic choice of $\lambda$. On the other hand, the global regularity of $P$ and $\tilde{P}$ are equivalent,
 because $U,U^{-1}:\cS(\R)\to\cS(\R)$, $U,U^{-1}:\cS^\prime(\R)\to\cS^\prime(\R)$. We may correspondingly
 re-formulate condition \eqref{separazione} and Theorem \ref{mainteo}.
\end{remark}
\begin{example}\label{exa5.2}\rm  ({\bf Globally elliptic operators}, cf. Grushin \cite{C4}, Shubin \cite{shubin}, Helffer \cite{Helffer:1}). Assume all the roots $\lambda_j$, $j=1,\ldots,m$, in \eqref{1.5} satisfy ${\rm Im}\, \lambda_j\not=0$. This is equivalent to the so-called global ellipticity of the symbol:
\begin{equation}\label{5.1}
|p(x,\xi)|\geq \epsilon (1+|x|+|\xi|)^m\quad{\rm for}\ |x|+|\xi|\geq R,
\end{equation}
for some $\epsilon>0$, $R>0$. In fact, if we write $p(x,\xi)$ as a sum of homogeneous terms
\begin{equation}\label{5.2}
p(x,\xi)=\sum_{0\leq j\leq m} p_j(x,\xi),\quad p_j(x,\xi)=\sum_{\alpha+\beta=j} c_{\alpha,\beta} x^\beta\xi^\alpha,
\end{equation}
the condition \eqref{5.1} is equivalent to
\begin{equation}\label{5.3}
p_m(x,\xi)\not=0\quad{\rm for}\ (x,\xi)\not=(0,0).
\end{equation}
The corresponding operators are globally regular, since \eqref{condition} is obviously satisfied. Basic example is the harmonic oscillator of Quantum Mechanics:
\[
P= D^2+x^2.
\]
\end{example}
\begin{example}\label{exa5.3}\rm
Consider now the case when $\lambda_j\in\R$ for some $j$. Assume for the moment that all the real roots $\lambda_j$ are distinct, that is $\partial_\xi p_m(x,\lambda_j x)\not=0$ for $x\not=0$. We may apply Theorem \ref{mainteo}. By factorization we have
\[
\xi_j(x)=\lambda_j x +c_j+O(x^{-1})
\]
with $c_j=p_{m-1}(x,\lambda_j x)/\partial_\xi p_m(x,\lambda_j x)$. Then $P$ is globally regular if and only if ${\rm Im}\, c_j\not=0$ for every real $\lambda_j$. Consider for example the elementary polynomial
\[
p(x,\xi)=\xi-x+c.
\]
Limiting attention to the corresponding homogeneous equation, a classical solution of $Du-xu+c u=0$ is given by $u(x)={\rm exp}[ ix^2/2-icx]$. If ${\rm Im}\, c\not=0$, then $u\not\in\cS^\prime(\R)$, whereas if $c\in\R$, then $u\in \cS^\prime(\R)$, $u\not\in\cS(\R)$, contradicting the global regularity.
\end{example}
\begin{example}\label{exa5.4}\rm ({\bf Quasi-elliptic operators}, cf. Grushin \cite{C4}, Boggiatto, Buzano, Rodino \cite{boggiatto-buzano-rodino}, Cappiello, Gramchev, Rodino \cite{CGR}). We pass now to consider the case when two, or more, real roots coincide. For simplicity we shall assume $\lambda_j=0$ for all $j=1,\ldots,m$, that is $c_{\alpha,\beta}=0$ for all $\alpha,\beta$ with $\alpha+\beta=m$, apart from $c_{m,0}=1$. We may then consider the largest rational number $q>1$ such that $\alpha+q\beta\leq m$ for all $(\alpha,\beta)$ with $c_{\alpha,\beta}\not=0$, and write
\begin{equation}\label{5.4}
p(x,\xi)=\sum_{\alpha+q\beta\leq m} c_{\alpha,\beta} x^\beta\xi^\alpha,
\end{equation}
with $c_{\alpha,\beta}\not=0$ for some $(\alpha,\beta)$ with $\alpha+q\beta=m$, $\alpha\not=m$. For the moment, we understand $q<\infty$, that is a term with $\beta\not=0$ actually exists. Note also that $q\geq \frac{m}{m-1}$. Ths symbol in \eqref{5.4} is called (globally) quasi-elliptic if
\begin{equation}\label{5.5}
p_{m,q}(x,\xi)=\sum_{\alpha+q\beta=m} c_{\alpha,\beta} x^\beta \xi^\alpha\not=0\quad {\rm for}\ (x,\xi)\not=(0,0).
\end{equation}
This implies in particular $c_{0,{m/q}}\not=0$ and $m/q$ is
a positive integer. The corresponding operators are
globally regular, as proved for example in \cite{C4},
\cite{boggiatto-buzano-rodino}, \cite{CGR}. Computing the
Puiseaux expansion \eqref{puiseaux}, we have
\begin{equation}\label{5.6}
\xi_j(x)= r_j^{\pm}|x|^{1/q}+o(|x|^{1/q})\quad {\rm for}\ x\to\pm\infty.
\end{equation}
where $r_j^{\pm}$ are the roots in $\C$ of $p_{m,q}(\pm1,r)=0$. We deduce that \eqref{5.5} is satisfied if and only if ${\rm Im}\, r_j^{\pm}\not=0$ for all the roots. \par
We then recapture the global regularity from Theorem \ref{mainteo}, provided condition \eqref{separazione} is satisfied, i.e. the roots $r_j^+$, or equivalently $r^-_j$, are distinct, cf. \eqref{1.x}. As example, consider the Airy-type operator
\[
P=D^2+cx,
\]
which is globally regular if and only if ${\rm Im}\, c\not=0$.
\end{example}
\begin{example}\rm \label{exa5.5}
Let the symbol $p(x,\xi)$ be of the form \eqref{5.4}, with distinct roots $r^{\pm}_j$ in \eqref{5.6}, but assume now $r^+_j\in\R$, or $r^-_j\in\R$, for some $j$. The condition \eqref{separazione} is satisfied, and we are led to determine the subsequent terms in the Puiseaux expansion \eqref{1.x}. We refer to Bliss \cite{bliss} for general rules of computations, and we limit ourselves here to the example
\begin{equation}\label{5.7}
p(x,\xi)=\xi^m-A\xi^r-x
\end{equation}
with $0\leq r<m$, $A\in\R$. The expansion in \eqref{5.6} reads in this case
\[
\xi_j(x)=e_j x^{1/m}+o(|x|^{1/m})
\]
where $e_j$, $j=1,\ldots,m$, are the $m$th roots of $1$. To
fix ideas, assume $m$ even; then we have for $x\to+\infty$
the two real roots $\pm 1$. It is easy to compute
\[
\xi_{\pm}(x)=\pm x^{1/m}+c_{\pm} x^s+o(x^s),\quad x\to+\infty,
\]
with $s=(r+1-m)/m$ and $c_{\pm}=\pm A/m$. Note that $1/m>s>-1$. Hence if ${\rm Im}\, A\not=0$, the condition \eqref{condition} is satisfied and Theorem \ref{mainteo} gives global regularity. Let us test this result on the corresponding homogeneous equation
\begin{equation}\label{5.8}
D^m u-A D^r u-x u=0.
\end{equation}
Every solution $u\in\cS^\prime(\R)$, or $u\in\cS(\R)$ of the equation can be regarded as inverse Fourier transform of a solution $v\in\cS^\prime(\R)$, respectively $v\in\cS(\R)$, of
\[
D v+ (x^m-Ax^r) v=0,
\]
having the (classical) solution
\[
v(x)= e^{iA x^{r+1}/(r+1)-i x^{m+1}/(m+1)}.
\]
If ${\rm Im}\, A\not=0$, then $v\not \in\cS^\prime (\R)$ or
$v\in\cS(\R)$, depending on $r$ and $A$, that agrees with
the global regularity of the operator. If $A\in\R$, then
$v\in \cS^\prime(\R)$, $v\not\in\cS(\R)$, contradicting
global regularity.
\end{example}
\begin{example}\label{exa5.6}\rm ({\bf Multi-quasi-elliptic operators}, cf. Boggiatto, Buzano, Rodino \cite{boggiatto-buzano-rodino}).
By conjugation with Fourier transform, which we may
consider as a metaplectic operator, we can treat operators
with symbol of the form \eqref{5.4} where the role of $x$
and $\xi$ is exchanged. Global regularity is granted by
\eqref{5.5} or \eqref{5.6} with ${\rm Im}\,
r^{\pm}_j\not=0$ where we exchange again $x$ with $\xi$;
relevant examples of the corresponding operators are
\[
D+ix^m,\quad D^2+x^{2m},
\]
for $m>1$. Multi-quasi-elliptic symbols are products of the
symbols of this form, the quasi-elliptic symbols in Example
\ref{exa5.4} and the globally elliptic symbols in Example
\ref{exa5.2}, possibly perturbed by terms in the interior
of the Newton polygon generated in this way, see
\cite{boggiatto-buzano-rodino} for details and equivalent
definitions. Under the condition \eqref{separazione}, we
recapture from Theorem \ref{mainteo} the result of global
regularity in \cite{boggiatto-buzano-rodino}. Limiting
again to an example, consider the symbol
\[
\xi^3+i x\xi^2+x^2.
\]
We have $\xi_1(x)=-i x+o(|x|)$,
 $\xi_2(x)=\sqrt{i} x^{1/2}+o(|x|^{1/2})$,
  $\xi_3(x)=-\sqrt{i}x^{1/2}+o(|x|^{1/2})$, writing $\pm \sqrt{i}$ for
the two roots of $i$. Hence \eqref{condition} is satisfied,
and the corresponding operator is globally regular.
\end{example}
\begin{example}\rm \label{exa5.7}({\bf SG-elliptic operators},
cf. Parenti \cite{rD1}, Cordes \cite{rD2}, Schrohe
\cite{rD6}, Schulze \cite[Section 1.4]{rD3}). The case
$q=\infty$ in Example \ref{exa5.4} corresponds to the case
of the operators with constant coefficients, that we treated
in Section 1. More generally, we can consider symbols of
the following form, with $m\geq0, n\geq0$:
\begin{equation}\label{5.9}
p(x,\xi)=\sum_{\alpha\leq m,\atop \beta\leq n}
c_{\alpha,\beta} x^\beta \xi^\alpha
\end{equation}
where we assume $c_{m,n}=1$. We say that the symbol
\eqref{5.9} is $SG-$elliptic if
\begin{equation}\label{5.10}
|p(x,\xi)|\geq \epsilon (1+|x|)^n(1+|\xi|)^m,\quad{\rm
for}\ |x|+|\xi|\geq R,
\end{equation}
for some $\epsilon>0$, $R\geq0$. In fact \eqref{5.10} is
equivalent to the following couple of conditions:
\begin{equation}\label{5.11}
\sum_{\alpha\leq m} c_{\alpha,n} \xi^\alpha\not=0\quad{\rm
for}\ \xi\not=0,
\end{equation}
\begin{equation}\label{5.12}
\sum_{\beta\leq n} c_{m,\beta} x^\beta\not=0\quad{\rm for}\
x\not=0.
\end{equation}
We know from \cite{rD1,rD2,rD6,rD3} that $SG$-elliptic
operators are globally regular. Willing to apply our
Theorem \ref{mainteo}, we first observe that factorizing
$p(x,\xi)$ we obtain the roots
\begin{equation}\label{5.13}
\xi_j(x)=r_j+o(1),\quad j=1,\ldots,m,
\end{equation}
where $r_j$ are exactly the roots of \eqref{5.11}. Since
\eqref{5.11} is equivalent to ${\rm Im}\, r_j\not=0$ for
every $j$, \eqref{5.11} implies \eqref{condition} for
\eqref{5.13}. Similarly we can argue on the local
ellipticity condition \eqref{5.12}, by using Remark
\ref{rem5.1}. So we recapture global regularity, in the
case when \eqref{separazione} is satisfied (i.e. the roots
of \eqref{5.11}, \eqref{5.12} are distinct).
\end{example}
\begin{example}\label{exa5.8}\rm
When, for $p(x,\xi)$ as in \eqref{5.9}, the equations
\eqref{5.11}, \eqref{5.12} admit real roots, we are led to
study terms in the Puiseaux expansion with negative
exponents. Consider for example the symbol
\begin{equation}\label{5.14}
(1+x^n)\xi^m+1,\quad m>0\ {\it and}\ n>0\ \textit{even
integers}.
\end{equation}
The local ellipticity condition \eqref{5.12} is satisfied,
whereas \eqref{5.11} reduces to $\xi^m=0$. We obtain
\[
\xi_j(x)=e_j |x|^{-n/m}+o(|x|^{-n/m}),\quad j=1,\ldots,n
\]
where $e_j$ are the $m$th roots of $-1$. Since $m$ is even,
${\rm Im}\, e_j\not=0$, and condition \eqref{condition} is
satisfied if $-n/m>-1$, i.e. $n<m$. So we may conclude
global regularity for the corresponding Weyl operator, in
this case. When $n\geq m$, both \eqref{separazione} and
\eqref{condition} fail. In fact, the operator is Fuchsian
at $\infty$ for $m=n$, and regular at $\infty$ if $n>m$.
Hence the solutions of the homogeneous equation belong to
$\cS^\prime (\R)$ and do not belong to $\cS(\R)$. We
address to Camperi \cite{rD17} for a general class of
symbols of the form \eqref{5.14}.
\end{example}
\begin{remark}\label{rem5.9}\rm
According to the definition of Schwartz \cite{schwartz}, a
differential operator with polynomial coefficients $P$ is
{\it hypoelliptic} in $\R$ if for every open subset
$\Omega\subset\R$ we have:
\begin{equation}
\label{5.15} u\in\mathcal{D}'(\Omega)\ {\rm and}\ Pu\in
C^\infty(\Omega) \Rightarrow u\in C^\infty(\Omega).
\end{equation}
We observe first that that every operator of the form
\eqref{1.1} with $a_{m,0}\not=0$ is obviously hypoelliptic.
In the general case, considered in Remark \ref{rem5.1}, we
have to take into account the coefficient $Q(x)$ of the
leading derivative, say of order $p<m$:
\[
Q(x)=\sum_{\beta\leq m-p} c_{p,\beta} x^\beta.
\]
Let us write $x_1,\ldots,x_r$, $r\leq m-p$, for the real
zeros of $Q(x)$. The hypoellipticity of $P$ is granted in
$\R\setminus\{x_1,\ldots,x_r\}$. It is then easy to prove
that {\it the global regularity in \eqref{1.3} implies
hypoellipticity in \eqref{5.15}}. \par In fact, assume
$u\in\mathcal{D}^\prime(\Omega)$ and $Pu\in
C^\infty(\Omega)$. Let $\phi\in C^\infty_0(\Omega)$,
$\phi(x)=1$ in a neighborhood of the points
$x_1,\ldots,x_r$ which belong to $\Omega$. We know that
${\rm sing-supp}\,u\subset\{x_1,\ldots,x_r\}\cap\Omega$,
hence $(1-\phi)u\in C^\infty(\Omega)$. On the other hand
$\phi u\in\mathcal{E}^\prime(\Omega)\subset\cS^\prime(\R)$
and $P(\phi u)=Pu-P(1-\phi)u\in
C^\infty_0(\Omega)\subset\cS(\R)$, hence $\phi u\in
C^\infty_0(\R)$ by the global regularity, and therefore
$u\in C^\infty(\Omega)$.\par Note that this argument cannot
be extended to operators with polynomial coefficients in
$\R^n$, $n>1$, because the manifold where the local
ellipticity fails is in general non-compact.\par Note also
that Schwartz hypoellipticity does not imply global
regularity. In dimension 1, consider for example the
operators $P_1= D_x$, $P_2= D_x+ x^h$, $h\geq2$, which are
obviously Schwartz hypoelliptic, but not globally regular.
In particular, $P_2$ keeps the Schwartz hypoellipticity
after any possible metaplectic conjugation. For a study of
the Schwartz hypoellipticity at $x_1,\ldots,x_r$, we
address to Kannai \cite{kannai}, where a necessary and
sufficient condition was given under an asymptotic
separation condition, similar to \eqref{separazione}.

\end{remark}
\section*{Acknowledgments}
We would like to thank Antonio J. Di Scala very much for several useful discussions.

\end{document}